\documentclass[hidelinks,onefignum,onetabnum]{siamart250211}

\usepackage{lipsum}
\usepackage{amsfonts}
\usepackage{graphicx}
\usepackage{epstopdf}
\usepackage{algorithmic}
\ifpdf
  \DeclareGraphicsExtensions{.eps,.pdf,.png,.jpg}
\else
  \DeclareGraphicsExtensions{.eps}
\fi

\newsiamremark{remark}{Remark}
\newsiamremark{hypothesis}{Hypothesis}
\crefname{hypothesis}{Hypothesis}{Hypotheses}
\newsiamthm{claim}{Claim}
\newsiamremark{fact}{Fact}
\crefname{fact}{Fact}{Facts}

\headers{ARB Preconditioners for Large-Scale Parametrized PDE\lowercase{s}}{X. F\lowercase{an}, Q. H\lowercase{u}, S. Z\lowercase{hang} }

\title{Additive Reduced Basis Preconditioners for Large-Scale Parametrized PDE\lowercase{s}\thanks{This manuscript is available as a preprint.
\funding{This work was supported in part by National Natural Science Foundation of China
(U25A20200).}}}

\author{
X\lowercase{innan} F\lowercase{an}\thanks{College of Mathematics, Sichuan University, 610064 Chengdu, China (\email{fanxinnan2025@163.com}).}
\and Q\lowercase{ixiao} H\lowercase{u}\thanks{Key Laboratory of Nuclear Reactor System Design Technology, Nuclear Power Institute of China,   610213 Chengdu, China (\email{hqixiao2025@163.com}).}
\and S\lowercase{hiquan} Z\lowercase{hang}\thanks{ College of Mathematics, Sichuan University, 610064 Chengdu, China
(\url{shiquanzhang@scu.edu.cn}).}
}

\usepackage{amsopn}

\ifpdf
\hypersetup{
  pdftitle={Additive Reduced Basis Preconditioners for Large-Scale Parametrized PDEs},
pdfauthor={author}
}
\fi

\usepackage{pgfplots}
\pgfplotsset{compat=1.17}
\usepackage{array}
\usepackage{graphicx,epstopdf}
\usepackage[caption=false]{subfig}

\begin{document}

\maketitle

\begin{abstract}
We introduce a class of additive reduced basis preconditioners designed to accelerate the iterative solution of large-scale linear systems arising from discretized parametrized PDEs. The main idea is to regularize the inherently singular reduced-order approximation by adding simple correction terms: either a scaled identity correction or a projected correction based on a basic preconditioner. This yields nonsingular preconditioners under explicit and easily checked conditions. The construction and application of the preconditioners are integrated into an FGMRES framework through an offline strategy that dynamically builds the reduced-basis component by proper orthogonal decomposition at each FGMRES step. We establish sufficient conditions for the nonsingularity of the preconditioners and derive error bounds for the preconditioned Richardson iteration. Numerical results for convection-diffusion, anisotropic vortex, Stokes, and Helmholtz problems are provided to verify the efficiency and convergence of the proposed ARB preconditioners. The method consistently converges in a few iterations and substantially reduces online solve time, supporting its efficiency for multi-query engineering scenarios.
\end{abstract}
\begin{keywords}
parametrized PDEs; reduced basis; iterative methods; preconditioners.
\end{keywords}
\begin{MSCcodes}
65N30, 65F08, 65F10.
\end{MSCcodes}

\section{Introduction}
Parametrized partial differential equations (PDEs) involving varying physical, geometric, or operating conditions play a pivotal role in modern scientific computing and engineering design. In digital twins, uncertainty quantification, design optimization, and real-time control, one must repeatedly solve the same model for many different parameter configurations \cite{benner2015survey, kapteyn2021probabilistic, prudhomme2002reliable}. High-fidelity finite element and finite volume discretizations provide the accuracy needed for engineering validation, but they lead to large sparse linear systems whose size can reach millions or billions of degrees of freedom \cite{elman2014finite, wathen2015preconditioning}. Direct solvers are therefore often too expensive in the many-query setting, and Krylov subspace methods, including the generalized minimal residual (GMRES) method and its flexible variants, have become standard tools for large nonsymmetric or indefinite systems \cite{saad1993flexible, saad2003iterative, saad1986gmres}. Their efficiency, however, depends strongly on the condition number and spectral structure of the discretized operators; as the mesh is refined or as parameters move across physical regimes, convergence can deteriorate severely \cite{carr2021preconditioning, simoncini2007recent}.

Model order reduction offers a complementary way to reduce this computational burden. Reduced basis (RB) and proper orthogonal decomposition (POD) methods exploit the low intrinsic dimension of many parametrized solution manifolds by projecting the full-order problem onto a subspace generated from selected high-fidelity snapshots \cite{hesthaven2016certified, quarteroni2015reduced, rozza2008reduced}. The offline-online decomposition and rigorous a posteriori estimation machinery make this paradigm especially effective for rapid and reliable repeated evaluations \cite{prudhomme2002reliable, grepl2005posteriori}. For non-affine or nonlinear parameter dependence, empirical interpolation and discrete empirical interpolation are commonly used to recover an efficient online stage \cite{barrault2004empirical, chaturantabut2010nonlinear}. Matrix DEIM and analytical preconditioning ideas further extend this strategy to algebraic operators and parametric collocation settings \cite{chen2014parametric, negri2015efficient}.

Nevertheless, directly using a reduced model as the final solver can be limiting when the solution manifold is not well approximated by a low-dimensional space. In particular, transport-dominated solutions and wave propagation can exhibit slow Kolmogorov-width decay, so a small static reduced space can fail to meet strict engineering tolerances \cite{dahmen2014double, greif2019decay}. Although local nonlinear reduced bases may mitigate this limitation \cite{amsallem2012nonlinear}, achieving the required accuracy can still require a much larger reduced space and a more expensive offline stage, which weakens the computational advantage of direct reduced-order solves.

Rather than replacing the high-fidelity solver directly by a reduced model, a growing line of work uses low-dimensional information to accelerate the full-order iterative process. Krylov deflation and subspace recycling reuse spectral or solution information across related linear systems \cite{morgan2002gmres, parks2006recycling}. POD-augmented iterations provide a closely related way to inject snapshot information into Krylov solvers \cite{carlberg2016krylov}. Domain decomposition and two-level preconditioning provide another classical framework in which a coarse component removes global error modes while local or algebraic preconditioners handle the remaining fine-scale components \cite{dolean2015domain, wathen2015preconditioning}. Recent data-driven work also explores preconditioning ideas in physics-informed neural networks and learning-based domain-decomposition strategies \cite{liu2024preconditioning, sun2024domain}. Projection-based RB preconditioning is attractive in this landscape because it keeps the full-order residual as the convergence criterion while exploiting parametrized snapshot information in a controlled algebraic manner.

Recently, the Multi Space Reduced Basis (MSRB) preconditioner was introduced for large-scale parametrized PDEs and further investigated for advection-diffusion and Stokes problems \cite{dalsanto2017investigation, santo2018multi, dalsanto2019stokes}. The MSRB approach constructs a sequence of RB spaces, each tailored to an error-residual equation arising at a given step of Richardson or flexible GMRES (FGMRES), and combines an RB coarse solver with a nonsingular fine-grid preconditioner. Following a similar broad philosophy of using iteration-dependent reduced spaces \cite{santo2018multi}, we propose a class of reduced basis preconditioners to accelerate the iterative solution of large-scale linear systems arising from discretized parametrized PDEs. Instead of relying on a multiplicative two-level structure with a separate fine-grid preconditioner \cite{santo2018multi}, the key difference is that we directly add simple correction terms to the singular RB approximation. This leads to additive reduced basis (ARB) preconditioners with explicit nonsingularity conditions, inexpensive application formulas, and a compact implementation. The resulting preconditioners are combined with FGMRES for solving the full-order model, so the final accuracy is that of the high-fidelity discretization rather than that of the RB approximation alone.

The remainder of this paper is organized as follows. \Cref{sec:preliminaries} recalls the RB method and the GMRES framework. \Cref{sec:arb-preconditioners} constructs the ARB preconditioners, proves their nonsingularity under transparent assumptions, and explains their dynamic offline generation for FGMRES. \Cref{sec:numerical-experiments} presents numerical experiments for convection-diffusion, anisotropic vortex, Stokes, and Helmholtz problems, with comparisons against classical ILU and AMG preconditioners. Finally, \cref{sec:conclusions} summarizes the paper and discusses the main practical implications.

\section{Preliminaries}\label{sec:preliminaries}
Solving parametrized PDEs is computationally demanding, especially when many parameter instances must be considered. After discretizing such equations with standard methods such as the Galerkin finite element method on a subspace $V_h$ of dimension $N_h$ \cite{brezzi1991mixed, elman2014finite}, we obtain a parametrized linear system of the form
\begin{equation}\label{eq:parametrized-linear-system}
\mathbf{A}_h(\boldsymbol{\mu})\mathbf{u}_h(\boldsymbol{\mu})=\mathbf{f}_h(\boldsymbol{\mu}),
\end{equation}
where $\boldsymbol{\mu}\in\mathcal{D}\subset\mathbb{R}^{m}$ is the vector of input parameters, $\mathbf{u}_h(\boldsymbol{\mu}), \mathbf{f}_h(\boldsymbol{\mu})\in\mathbb{R}^{N_h}$ denote the solution vector and load vector, respectively, and $\mathbf{A}_h(\boldsymbol{\mu})\in\mathbb{R}^{N_h\times N_h}$ is the stiffness matrix, assumed nonsingular on the working space.

\subsection{The RB method}

The RB method aims to approximate the solution manifold of \eqref{eq:parametrized-linear-system}, denoted by $\mathcal{M}_h = \{\mathbf{u}_h(\boldsymbol{\mu}) : \boldsymbol{\mu} \in \mathcal{D}\}$, using a low-dimensional subspace $V_N$ of dimension $N$, with $N \ll N_h$. For a comprehensive theoretical foundation and derivation of this method, we refer the reader to \cite[chapter 3]{hesthaven2016certified}. The method approximates the high-fidelity solution $\mathbf{u}_h(\boldsymbol{\mu})$ in the reduced subspace $V_N$ by a linear combination of basis vectors,
\begin{displaymath}
        \mathbf{u}_h(\boldsymbol{\mu}) \approx \mathbf{P}\mathbf{u}_N(\boldsymbol{\mu}),
\end{displaymath}
where $\mathbf{P} \in \mathbb{R}^{N_h \times N}$ is a matrix whose columns, $\{\mathbf{p}_n\}_{n=1}^N$, form an orthonormal basis of $V_N$. Projecting the original system \eqref{eq:parametrized-linear-system} onto $V_N$ by the Galerkin method yields the reduced system for $\mathbf{u}_N(\boldsymbol{\mu})$:
\begin{equation}\label{eq:reduced-system}
\mathbf{A}_N(\boldsymbol{\mu})\mathbf{u}_N(\boldsymbol{\mu})=\mathbf{f}_N(\boldsymbol{\mu}),
\end{equation}
where
\begin{displaymath}
        \mathbf{A}_N(\boldsymbol{\mu}) = \mathbf{P}^T \mathbf{A}_h(\boldsymbol{\mu}) \mathbf{P} \in \mathbb{R}^{N \times N}, \quad \mathbf{f}_N(\boldsymbol{\mu}) = \mathbf{P}^T \mathbf{f}_h(\boldsymbol{\mu}) \in \mathbb{R}^{N}.
\end{displaymath}
Provided $\mathbf{A}_N(\boldsymbol{\mu})$ is nonsingular, the reduced coefficients are recovered by $\mathbf{u}_N(\boldsymbol{\mu}) = \mathbf{A}_N^{-1}(\boldsymbol{\mu})\mathbf{f}_N(\boldsymbol{\mu})$. The efficiency of this method relies on an offline-online decomposition (see \cite[subsection 3.3]{hesthaven2016certified}), in which all computations whose costs depend on $N_h$ are performed offline and the online cost depends only on $N$. When $\mathbf{A}_h(\boldsymbol{\mu})$ and $\mathbf{f}_h(\boldsymbol{\mu})$ depend affinely on $\boldsymbol{\mu}$, the reduced system \eqref{eq:reduced-system} can be assembled and solved rapidly online. General non-affine or nonlinear dependencies are commonly treated by empirical interpolation or related hyper-reduction techniques \cite{barrault2004empirical, chaturantabut2010nonlinear}.

\begin{algorithm}[t]
    \caption{POD}
    \label{alg:pod}
    \begin{algorithmic}[1]
   \STATE \textbf{procedure} $\text{POD}\left( \mathbf{u}_h(\boldsymbol{\mu}_1), \mathbf{u}_h(\boldsymbol{\mu}_2), \ldots, \mathbf{u}_h(\boldsymbol{\mu}_{n_s}); \epsilon_{\text{tol}}\right)$
        \STATE $\mathbf{S}=\left[\mathbf{u}_h(\boldsymbol{\mu}_1), \mathbf{u}_h(\boldsymbol{\mu}_2), \ldots, \mathbf{u}_h(\boldsymbol{\mu}_{n_s})\right]$
        \STATE Compute the economy SVD: $[\mathbf{U}_r, \boldsymbol{\Sigma}_r, \mathbf{Z}_r] = \text{SVD}(\mathbf{S})$
        \STATE Extract the singular values $\{\sigma_i\}_{i=1}^{r}$ from $\boldsymbol{\Sigma}_r$
        \STATE Select the smallest $N$ such that $\frac{\sum_{i=1}^{N} \sigma_i^2}{\sum_{i=1}^{r} \sigma_i^2} \ge 1 - \epsilon_{\text{tol}}^2$

        \STATE Save $\mathbf{P}=[(\mathbf{U}_r)_{:,1}, \dots, (\mathbf{U}_r)_{:,N}]$
       \STATE \textbf{end procedure}
    \end{algorithmic}
\end{algorithm}

The quality of the RB approximation depends critically on the construction of the basis matrix $\mathbf{P}$. A widely used approach is POD, discussed in detail in \cite[subsection 3.2]{hesthaven2016certified}. POD generates a basis that minimizes the average projection error over a training set of snapshots. Let $\Xi_{\text{train}} = \{\boldsymbol{\mu}_1, \dots, \boldsymbol{\mu}_{n_s}\} \subset \mathcal{D}$ be a finite training set. We first compute the high-fidelity solutions (snapshots) for these parameters and assemble them into a matrix
\begin{displaymath}
\mathbf{S}=\left[\mathbf{u}_h(\boldsymbol{\mu}_1), \mathbf{u}_h(\boldsymbol{\mu}_2), \ldots, \mathbf{u}_h(\boldsymbol{\mu}_{n_s})\right].
\end{displaymath}
The POD basis is derived from the economy singular value decomposition $\mathbf{S} = \mathbf{U}_r\mathbf{\Sigma}_r\mathbf{Z}_r^T$, where $r=\operatorname{rank}(\mathbf{S})$. Here, $\mathbf{U}_r \in \mathbb{R}^{N_h \times r}$ contains the left singular vectors, and $\mathbf{\Sigma}_r = \operatorname{diag}(\sigma_1, \dots, \sigma_r)$ contains the singular values sorted in descending order. The optimal basis of dimension $N$ consists of the first $N$ columns of $\mathbf{U}_r$. The approximation error is governed by the decay of the singular values
\begin{displaymath}
        \sum_{i=1}^{n_s} \left\| \mathbf{u}_h(\boldsymbol{\mu}_i) - \mathbf{P}\mathbf{P}^T\mathbf{u}_h(\boldsymbol{\mu}_i) \right\|_2^2 = \sum_{i=N+1}^{r} \sigma_i^2.
\end{displaymath}
For more details, we refer the reader to \cite{quarteroni2015reduced}. The dimension $N$ is typically selected so that the relative energy of the neglected modes is at most $\epsilon_{\text{tol}}^2$. The detailed procedure is presented in \cref{alg:pod}.

\subsection{The GMRES method}
The GMRES method, proposed by Saad and Schultz \cite{saad1986gmres}, is widely regarded as one of the most robust Krylov subspace techniques for solving linear systems \cite{simoncini2007recent}. Consider a generic linear system of the form $\mathbf{A}\mathbf{x} = \mathbf{b}$, where $\mathbf{A} \in \mathbb{R}^{n\times n}$ is a large sparse nonsingular matrix and $\mathbf{b} \in \mathbb{R}^n$ is an $n$-dimensional vector. Let $\mathbf{x}_0$ be an initial guess and let $\mathbf{r}_0 = \mathbf{b} - \mathbf{A}\mathbf{x}_0$ be the initial residual. GMRES seeks an approximate solution $\mathbf{x}_k$ at the $k$-th iteration from the affine subspace $\mathbf{x}_0 + \mathcal{K}_k(\mathbf{A}, \mathbf{r}_0)$, where $\mathcal{K}_k$ is the $k$-th-order Krylov subspace defined by
\begin{displaymath}
    \mathcal{K}_k(\mathbf{A}, \mathbf{r}_0) = \text{span}\{\mathbf{r}_0, \mathbf{A}\mathbf{r}_0, \dots, \mathbf{A}^{k-1}\mathbf{r}_0\}.
\end{displaymath}
The approximation $\mathbf{x}_k$ is uniquely determined by minimizing the Euclidean norm of the residual vector $\mathbf{r}_k = \mathbf{b} - \mathbf{A}\mathbf{x}_k$ over the affine subspace, that is,
\begin{displaymath}
        \mathbf{x}_k = \text{argmin}_{\mathbf{x} \in \mathbf{x}_0 + \mathcal{K}_k(\mathbf{A}, \mathbf{r}_0)} \|\mathbf{b} - \mathbf{A}\mathbf{x}\|_2.
\end{displaymath}
To solve this minimization problem efficiently, GMRES uses the Arnoldi process to construct an orthonormal basis $\{\mathbf{v}_1, \dots, \mathbf{v}_k\}$ for $\mathcal{K}_k(\mathbf{A}, \mathbf{r}_0)$. Let $\mathbf{V}_k = [\mathbf{v}_1, \dots, \mathbf{v}_k] \in \mathbb{R}^{n \times k}$. The relation between the matrix $\mathbf{A}$ and the Arnoldi vectors is governed by
\begin{displaymath}
        \mathbf{A}\mathbf{V}_k = \mathbf{V}_{k+1}\overline{\mathbf{H}}_k,
\end{displaymath}
where $\overline{\mathbf{H}}_k \in \mathbb{R}^{(k+1) \times k}$ is an upper Hessenberg matrix generated during the Arnoldi process. Writing $\mathbf{x}_k = \mathbf{x}_0 + \mathbf{V}_k\mathbf{y}_k$ with $\mathbf{y}_k \in \mathbb{R}^k$, the minimization problem reduces to a low-dimensional least squares problem,
\begin{displaymath}
        \mathbf{y}_k = \text{argmin}_{\mathbf{y} \in \mathbb{R}^k} \left\| \beta \mathbf{e}_1 - \overline{\mathbf{H}}_k \mathbf{y} \right\|_2,
\end{displaymath}
where $\beta = \|\mathbf{r}_0\|_2$ and $\mathbf{e}_1 = [1, 0, \dots, 0]^T \in \mathbb{R}^{k+1}$. This reduced problem is typically solved via QR factorization using Givens rotations. For details of the procedure, which uses the modified Gram-Schmidt algorithm for numerical stability, see \cite[chapter 6]{saad2003iterative}.
As the iteration count $k$ increases, the storage requirements for $\mathbf{V}_k$ and the computational cost of the orthogonalization grow linearly and quadratically, respectively. To mitigate this, a restarted version, denoted as GMRES($m$), is often employed \cite[chapter 6]{saad2003iterative}. In GMRES($m$), the algorithm is restarted every $m$ steps, using the most recent approximation $\mathbf{x}_m$ as the new initial guess $\mathbf{x}_0$, thereby keeping the Krylov subspace dimension bounded.

\begin{remark}
Using either the GMRES method or the RB method alone is often inefficient for parametrized PDEs. GMRES may converge slowly unless a problem-dependent preconditioner is available. The RB method often incurs high offline costs when high accuracy is required, especially for problems with complicated parameter dependence. Consequently, we employ the RB method as a preconditioning component for GMRES, which can reduce the offline burden while accelerating the iterative scheme to the desired accuracy.
\end{remark}

\section{ARB preconditioners}\label{sec:arb-preconditioners}

Because standard GMRES may converge slowly, preconditioning is used to accelerate convergence. With a nonsingular preconditioner $\mathbf{M}$, the preconditioned linear system of \cref{eq:parametrized-linear-system} becomes
\begin{equation}\label{eq:preconditioned}
\mathbf{M}^{-1}\mathbf{A}_h(\boldsymbol{\mu})\mathbf{u}_h(\boldsymbol{\mu})=\mathbf{M}^{-1}\mathbf{f}_h(\boldsymbol{\mu}).
\end{equation}
A good preconditioner requires $\mathbf{M}^{-1}$ to be a useful approximation of $\mathbf{A}_h^{-1}(\boldsymbol{\mu})$, while its application is much cheaper than solving with $\mathbf{A}_h(\boldsymbol{\mu})$ directly.

In this section, we construct ARB preconditioners to accelerate the solution of \cref{eq:parametrized-linear-system}. We first present the basic construction and analyze nonsingularity. We then explain why dynamic construction is needed, use Richardson's method to illustrate the theoretical results, and finally formulate the complete algorithms proposed in this work.

\subsection{Construction of ARB preconditioners}
From \eqref{eq:reduced-system}, the RB solution can be written as
\begin{displaymath}
    \mathbf{P}\mathbf{u}_N(\boldsymbol{\mu})=\mathbf{P}\left(\mathbf{P}^T\mathbf{A}_h(\boldsymbol{\mu})\mathbf{P}\right)^{-1}\mathbf{P}^T\mathbf{f}_h(\boldsymbol{\mu}):=\widetilde{\mathbf{M}}(\boldsymbol{\mu})\mathbf{f}_h(\boldsymbol{\mu}),
 \end{displaymath}
where
 \begin{displaymath}
    \widetilde{\mathbf{M}}(\boldsymbol{\mu}):=\mathbf{P}\left(\mathbf{P}^T\mathbf{A}_h(\boldsymbol{\mu})\mathbf{P}\right)^{-1}\mathbf{P}^T.
 \end{displaymath}
Compared with the full-order solution of \eqref{eq:parametrized-linear-system}, i.e.,
\begin{displaymath}
    \mathbf{u}_h(\boldsymbol{\mu})=\mathbf{A}_h^{-1}(\boldsymbol{\mu})\mathbf{f}_h(\boldsymbol{\mu}),
 \end{displaymath}
this suggests that the RB method uses $\widetilde{\mathbf{M}}(\boldsymbol{\mu})$ as an approximation of $\mathbf{A}_h^{-1}(\boldsymbol{\mu})$. Consequently, one might expect the inverse of $\widetilde{\mathbf{M}}(\boldsymbol{\mu})$ to be a useful preconditioner for \eqref{eq:parametrized-linear-system}. Unfortunately, $\widetilde{\mathbf{M}}(\boldsymbol{\mu})$ is always singular, which prevents us from using it directly as a preconditioner. The idea of ARB is to add a simple term to $\widetilde{\mathbf{M}}(\boldsymbol{\mu})$, i.e.,
\begin{displaymath}
    \mathbf{M}^*(\boldsymbol{\mu}):=\widetilde{\mathbf{M}}(\boldsymbol{\mu})+\mathbf{C}
\end{displaymath}
such that $\mathbf{M}^*(\boldsymbol{\mu})$ is nonsingular. The simplest case is based on the following proposition.
\begin{proposition}\label{prop:shifted-nonsingularity}
    For any matrix $\mathbf{A}\in\mathbb{R}^{n\times n}$, there exists $\alpha\in \mathbb{R}$ such that $\mathbf{A}+\alpha\mathbf{I}$ is nonsingular, where $\mathbf{I}\in\mathbb{R}^{n\times n}$ is the identity matrix.
\end{proposition}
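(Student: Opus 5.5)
The plan is to work with the characteristic polynomial. Set $p(\lambda)=\det(\lambda\mathbf{I}-\mathbf{A})$, a monic polynomial of degree $n$ in $\lambda$ with real coefficients. We have
\begin{displaymath}
\det(\mathbf{A}+\alpha\mathbf{I})=(-1)^n\det(-\alpha\mathbf{I}-\mathbf{A})=(-1)^n p(-\alpha).
\end{displaymath}
So $\mathbf{A}+\alpha\mathbf{I}$ is singular exactly when $-\alpha$ is an eigenvalue of $\mathbf{A}$.

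The key step is counting. Since $p$ is a nonzero polynomial of degree $n\ge 1$, it has at most $n$ roots in $\mathbb{C}$, and so at most $n$ real roots. The set $\{\alpha\in\mathbb{R} : \mathbf{A}+\alpha\mathbf{I}\ \text{singular}\}$ therefore has at most $n$ elements. Because $\mathbb{R}$ is infinite, some real $\alpha$ lies outside this set, and for that $\alpha$ the matrix $\mathbf{A}+\alpha\mathbf{I}$ is nonsingular. In fact this holds for all but finitely many $\alpha$.

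For a more explicit choice, useful later when the shift must be quantified, I would pick any $\alpha$ with $|\alpha|>\|\mathbf{A}\|$ in a submultiplicative norm, such as $\|\cdot\|_2$. Every eigenvalue $\lambda$ of $\mathbf{A}$ satisfies $|\lambda|\le\rho(\mathbf{A})\le\|\mathbf{A}\|<|\alpha|$, so $-\alpha$ cannot be an eigenvalue. Alternatively, write $\mathbf{A}+\alpha\mathbf{I}=\alpha(\mathbf{I}+\alpha^{-1}\mathbf{A})$ and use the Neumann series, since $\|\alpha^{-1}\mathbf{A}\|<1$.

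I expect no real obstacle here. The only point needing care is noting that eigenvalues may be complex; this is harmless, because we only need some real $\alpha$ avoiding finitely many forbidden values.
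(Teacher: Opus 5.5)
Your argument is correct and is essentially the paper's. Both proofs rest on two facts: the spectrum of $\mathbf{A}$ is finite (you get this from the degree-$n$ characteristic polynomial, the paper from the fundamental theorem of algebra), and infinitely many real shifts are available (the paper draws from the sequence $\alpha_0+1/t$, you from all of $\mathbb{R}$). Your extra explicit choice $|\alpha|>\|\mathbf{A}\|$ is also valid, but in the paper's setting $\alpha$ is deliberately taken small. What matters there is your ``all but finitely many $\alpha$'' conclusion, which, like the paper's sequence argument, allows $\alpha$ arbitrarily close to $0$; the large-shift bound does not.
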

\begin{proof}
    Let $\sigma(\mathbf{A})$ be the spectrum of $\mathbf{A}$, i.e.,
\begin{displaymath}
    \sigma(\mathbf{A})=\{\lambda\in\mathbb{C}: \mathbf{A}-\lambda\mathbf{I}~\text{is singular}\}.
\end{displaymath}
By the fundamental theorem of algebra, $\sigma(\mathbf{A})$ is finite. Fix $\alpha_0\in\mathbb{R}$ and consider
\begin{displaymath}
    \alpha_t=\alpha_0+\frac{1}{t},\quad t=1,2,\ldots.
\end{displaymath}
Since $\{\alpha_t\}$ is infinite, there exists $t_0$ such that $-\alpha_{t_0}\notin\sigma(\mathbf{A})$.
\end{proof}
In practical applications, we do not compute the spectrum of $\mathbf{A}$ because it is a large-scale matrix. For a fixed $\boldsymbol{\mu}$ or finitely many values of $\boldsymbol{\mu}$, the spectrum of $\widetilde{\mathbf{M}}(\boldsymbol{\mu})$ is a finite set. A simple practical idea is therefore to preselect $\alpha$ and directly construct the Type I preconditioner as
\begin{equation}\label{def:p1}
  \mathbf{M}_{(1)}(\boldsymbol{\mu}):=\left(\widetilde{\mathbf{M}}(\boldsymbol{\mu})+\alpha\mathbf{I}\right)^{-1}.
\end{equation}
According to \cref{prop:shifted-nonsingularity}, there exists $\alpha$ such that $\mathbf{M}_{(1)}(\boldsymbol{\mu})$ is well defined. More precisely, this holds whenever $-\alpha \notin \sigma\left(\widetilde{\mathbf{M}}(\boldsymbol{\mu})\right)$.

Because $\mathbf{M}_{(1)}(\boldsymbol{\mu})$ may fail to be well defined for some $\alpha$, a more robust approach is to construct preconditioners based on the structure of $\mathbf{A}_h(\boldsymbol{\mu})$. The matrix $\mathbf{P}$ of the RB method has orthonormal columns, so it can be augmented to an orthonormal basis of $\mathbb{R}^{N_h}$. Let
$\mathbf{B} = [\mathbf{P}, \mathbf{Q}]$
be the resulting basis, which is an orthogonal matrix of order $N_h$. Denote
\begin{displaymath}    A_{\mathbf{B}}=\mathbf{B}^{-1}\mathbf{A}_h(\boldsymbol{\mu})\mathbf{B}=\mathbf{B}^T\mathbf{A}_h(\boldsymbol{\mu})\mathbf{B}=\begin{bmatrix}
        \mathbf{A}_{11} & \mathbf{A}_{12} \\
        \mathbf{A}_{21} & \mathbf{A}_{22}
    \end{bmatrix},
\end{displaymath}
where
\begin{displaymath}
\mathbf{A}_{11}=\mathbf{P}^T\mathbf{A}_h(\boldsymbol{\mu})\mathbf{P}, \mathbf{A}_{12}=\mathbf{P}^T\mathbf{A}_h(\boldsymbol{\mu})\mathbf{Q}, \mathbf{A}_{21}=\mathbf{Q}^T\mathbf{A}_h(\boldsymbol{\mu})\mathbf{P}, \mathbf{A}_{22}=\mathbf{Q}^T\mathbf{A}_h(\boldsymbol{\mu})\mathbf{Q}.
\end{displaymath}
If $\mathbf{A}_{11}$ is nonsingular (i.e., the RB reduced system is well posed), its Schur complement is
\begin{displaymath}
    \mathbf{S}=\mathbf{A}_{22}-\mathbf{A}_{21}\mathbf{A}_{11}^{-1}\mathbf{A}_{12}.
\end{displaymath}
Moreover, $\det(A_{\mathbf{B}})=\det(\mathbf{A}_{11})\det(\mathbf{S})$, which implies that $\mathbf{S}$ is nonsingular. Applying elementary row or column operations to the block matrix gives
\begin{displaymath}
    A_{\mathbf{B}}^{-1}=\begin{bmatrix}
\mathbf{A}_{11}^{-1}+\mathbf{A}_{11}^{-1}\mathbf{A}_{12}\mathbf{S}^{-1}\mathbf{A}_{21}\mathbf{A}_{11}^{-1} & -\mathbf{A}_{11}^{-1}\mathbf{A}_{12}\mathbf{S}^{-1}\\ -\mathbf{S}^{-1}\mathbf{A}_{21}\mathbf{A}_{11}^{-1} & \mathbf{S}^{-1}
 \end{bmatrix}.
\end{displaymath}
To find a simple approximation of $\mathbf{A}_h^{-1}(\boldsymbol{\mu})$, we drop the off-diagonal blocks of $A_{\mathbf{B}}^{-1}$ and approximate its main diagonal. Specifically, we replace $\mathbf{A}_{11}^{-1}+\mathbf{A}_{11}^{-1}\mathbf{A}_{12}\mathbf{S}^{-1}\mathbf{A}_{21}\mathbf{A}_{11}^{-1}$ with $\mathbf{A}_{11}^{-1}$, and replace $\mathbf{S}^{-1}$ with $\alpha\mathbf{I}$ or $\alpha\mathbf{Q}^T \mathbf{T}^{-1}\mathbf{Q}$, where $\mathbf{T}\in\mathbb{R}^{N_h\times N_h}$ can be a basic preconditioner and $\alpha\in\mathbb{R}$ is a given small scalar, i.e.,
\begin{equation}\label{eq:block-inverse-approximation}
A_{\mathbf{B}}^{-1}\approx\widetilde{A_{\mathbf{B}}}^{-1}:=\begin{bmatrix}
\mathbf{A}_{11}^{-1} & \mathbf{O}\\

\mathbf{O} & \alpha\mathbf{I}
 \end{bmatrix}\text{or}\begin{bmatrix}
\mathbf{A}_{11}^{-1} & \mathbf{O}\\

\mathbf{O} & \alpha\mathbf{Q}^T\mathbf{T}^{-1}\mathbf{Q}
 \end{bmatrix}.
\end{equation}
Consequently, we have
\begin{equation}\label{eq:arb-inverse-approximation}
\begin{aligned}
            \mathbf{A}_h^{-1}(\boldsymbol{\mu})&=\mathbf{B}A_{\mathbf{B}}^{-1}\mathbf{B}^T\approx\mathbf{B}\widetilde{A_{\mathbf{B}}}^{-1}\mathbf{B}^T\\
            &=\widetilde{\mathbf{M}}(\boldsymbol{\mu})+\alpha\mathbf{Q}\mathbf{Q}^T~\text{or}~\widetilde{\mathbf{M}}(\boldsymbol{\mu})+\alpha\mathbf{Q}\mathbf{Q}^T\mathbf{T}^{-1}\mathbf{Q}\mathbf{Q}^T\\
            &=\widetilde{\mathbf{M}}(\boldsymbol{\mu})+\alpha\left(\mathbf{I}-\mathbf{P}\mathbf{P}^T\right)~\text{or}~\widetilde{\mathbf{M}}(\boldsymbol{\mu})+\alpha\left(\mathbf{I}-\mathbf{P}\mathbf{P}^T\right)\mathbf{T}^{-1}\left(\mathbf{I}-\mathbf{P}\mathbf{P}^T\right).
\end{aligned}
\end{equation}
The last identity of \cref{eq:arb-inverse-approximation} holds because $\mathbf{I}=\mathbf{B}\mathbf{B}^T=\mathbf{P}\mathbf{P}^T+\mathbf{Q}\mathbf{Q}^T$. Thus, the Type II and Type III preconditioners are constructed respectively as
\begin{equation}\label{def:p2}
\mathbf{M}_{(2)}(\boldsymbol{\mu})
:=\left(\widetilde{\mathbf{M}}(\boldsymbol{\mu})+\alpha(\mathbf{I}-\mathbf{P}\mathbf{P}^T)\right)^{-1},
\end{equation}
and
\begin{equation}\label{def:p3}
\mathbf{M}_{(3)}(\boldsymbol{\mu})
:= \left(\widetilde{\mathbf{M}}(\boldsymbol{\mu})+\alpha\left(\mathbf{I}-\mathbf{P}\mathbf{P}^T\right)\mathbf{T}^{-1}(\mathbf{I}-\mathbf{P}\mathbf{P}^T)\right)^{-1}.
\end{equation}

\begin{remark}
Usually, $\widetilde{\mathbf{M}}(\boldsymbol{\mu})$ and $\mathbf{P}\mathbf{P}^T$ are dense matrices of order $N_h$, so directly computing and storing them is impossible. However, when applying a preconditioner $\mathbf{M}$, one does not need to obtain and store $\mathbf{M}$ or $\mathbf{M}^{-1}$ explicitly; it is sufficient to compute $\mathbf{M}^{-1}\mathbf{v}$ for any input vector $\mathbf{v}$. Thus, we need only store $\mathbf{P}$ and $\mathbf{A}_N^{-1}(\boldsymbol{\mu})=\left(\mathbf{P}^T\mathbf{A}_h(\boldsymbol{\mu})\mathbf{P}\right)^{-1}$, and apply multiplication of these matrices (or their transposes) to the input vector in the order specified by the preconditioner definitions. This is easy to implement because we have explicit expressions for $\mathbf{M}^{-1}$ in \cref{def:p1}, \cref{def:p2}, and \cref{def:p3}. It is also inexpensive because $\mathbf{P}$ has only $N$ columns and $\mathbf{A}_N^{-1}(\boldsymbol{\mu})$ is a matrix of order $N$.
\end{remark}

\subsection{Nonsingularity of the resulting preconditioners}
As \cref{prop:shifted-nonsingularity} shows, the Type I preconditioner is not guaranteed to be well defined. However, as long as $-\alpha \notin \sigma\left(\widetilde{\mathbf{M}}(\boldsymbol{\mu})\right)$, $\widetilde{\mathbf{M}}(\boldsymbol{\mu})+\alpha\mathbf{I}$ is nonsingular. If $\alpha$ is selected randomly from a continuous range, the probability that $\widetilde{\mathbf{M}}(\boldsymbol{\mu})+\alpha\mathbf{I}$ is singular is zero. We therefore consider the Type I preconditioner as a possible candidate for a simple low-cost method, with its admissibility checked numerically.

To analyze the nonsingularity of Type II and Type III preconditioners, we first introduce the following lemma.
\begin{lemma}\label{lem:positive-definite-compression}
    Let $\mathbf{A}\in\mathbb{R}^{n\times n}$ be a positive definite matrix, and let $\mathbf{B}\in\mathbb{R}^{n\times m}~(n\geq m)$ be a matrix with full column rank. Then $\mathbf{B}^T\mathbf{A}\mathbf{B}$ is positive definite.
\end{lemma}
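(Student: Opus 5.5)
The plan is to verify positive definiteness directly from the quadratic-form definition. Here ``positive definite'' for a possibly nonsymmetric real matrix is understood as $\mathbf{x}^T\mathbf{A}\mathbf{x}>0$ for all nonzero $\mathbf{x}\in\mathbb{R}^n$. This is the natural reading in the paper's setting, since $\mathbf{A}_h(\boldsymbol{\mu})$ is in general nonsymmetric, e.g.\ for convection-diffusion. No symmetry is needed in the argument. If $\mathbf{A}$ happens to be symmetric, then $\mathbf{B}^T\mathbf{A}\mathbf{B}$ is symmetric as well, so the conclusion also holds in the stricter symmetric sense.

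First, fix an arbitrary nonzero $\mathbf{y}\in\mathbb{R}^m$ and set $\mathbf{x}=\mathbf{B}\mathbf{y}\in\mathbb{R}^n$. The key step is that $\mathbf{x}\neq\mathbf{0}$. This follows from the full column rank of $\mathbf{B}$: its null space is trivial, so $\mathbf{B}\mathbf{y}=\mathbf{0}$ forces $\mathbf{y}=\mathbf{0}$. The hypothesis $n\ge m$ is exactly what makes full column rank possible.

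Second, compute
\begin{displaymath}
\mathbf{y}^T(\mathbf{B}^T\mathbf{A}\mathbf{B})\mathbf{y}=(\mathbf{B}\mathbf{y})^T\mathbf{A}(\mathbf{B}\mathbf{y})=\mathbf{x}^T\mathbf{A}\mathbf{x}>0,
\end{displaymath}
which uses the positive definiteness of $\mathbf{A}$ and $\mathbf{x}\neq\mathbf{0}$. Since $\mathbf{y}\neq\mathbf{0}$ was arbitrary, $\mathbf{B}^T\mathbf{A}\mathbf{B}$ is positive definite.

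There is no real obstacle. The only point needing care is the injectivity of $\mathbf{B}$, which guarantees that the compressed quadratic form is evaluated at a nonzero vector. It is also worth noting how the lemma will be used. With $\mathbf{B}=\mathbf{P}$, which has orthonormal and hence linearly independent columns, it gives that $\mathbf{A}_N(\boldsymbol{\mu})=\mathbf{P}^T\mathbf{A}_h(\boldsymbol{\mu})\mathbf{P}$ is positive definite, hence nonsingular. This is because a positive definite matrix has no nonzero null vector: $\mathbf{A}\mathbf{x}=\mathbf{0}$ would give $\mathbf{x}^T\mathbf{A}\mathbf{x}=0$. It therefore justifies the well-posedness of $\widetilde{\mathbf{M}}(\boldsymbol{\mu})$ needed for Types II and III.
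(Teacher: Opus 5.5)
Your proof is correct and matches the paper's argument: full column rank gives $\mathbf{B}\mathbf{x}\neq\mathbf{0}$ for $\mathbf{x}\neq\mathbf{0}$, and then $\mathbf{x}^T\mathbf{B}^T\mathbf{A}\mathbf{B}\mathbf{x}=(\mathbf{B}\mathbf{x})^T\mathbf{A}(\mathbf{B}\mathbf{x})>0$. One correction to your closing aside: the paper applies the lemma with $\mathbf{A}=\mathbf{T}^{-1}$ and $\mathbf{B}=\mathbf{Q}$ to show that $\mathbf{Q}^T\mathbf{T}^{-1}\mathbf{Q}$ is definite for Type III, while nonsingularity of $\mathbf{A}_{11}=\mathbf{P}^T\mathbf{A}_h(\boldsymbol{\mu})\mathbf{P}$ is simply assumed (indeed $\mathbf{A}_h(\boldsymbol{\mu})$ is indefinite in the Stokes and higher-wave-number Helmholtz tests, so the use you describe would not be available there).
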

\begin{proof}
For any $\mathbf{0}\neq\mathbf{x}\in\mathbb{R}^m$, $\mathbf{B}\mathbf{x}\neq\mathbf{0}$, we have
\begin{displaymath}
\mathbf{x}^T\mathbf{B}^T\mathbf{A}\mathbf{B}\mathbf{x}=\left( \mathbf{B}\mathbf{x} \right)^T\mathbf{A} \left(\mathbf{B}\mathbf{x}\right)>0,
\end{displaymath}
because $\mathbf{A}$ is positive definite.
\end{proof}
We now analyze the nonsingularity of Type II and Type III preconditioners.
\begin{theorem}\label{thm:type-ii-iii-nonsingularity}
    If $\mathbf{A}_{11}$ is nonsingular and $\mathbf{T}$ is definite (either positive definite or negative definite), then for any $\alpha\neq 0$, Type II and Type III preconditioners are both nonsingular.
\end{theorem}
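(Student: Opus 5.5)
The plan is to conjugate by the orthogonal matrix $\mathbf{B}=[\mathbf{P},\mathbf{Q}]$ and reduce nonsingularity to a block-diagonal statement. By construction (see \cref{eq:arb-inverse-approximation}), the matrices whose inverses define the Type II and Type III preconditioners can be written as
\begin{displaymath}
\widetilde{\mathbf{M}}(\boldsymbol{\mu})+\alpha(\mathbf{I}-\mathbf{P}\mathbf{P}^T)=\mathbf{B}\begin{bmatrix}\mathbf{A}_{11}^{-1} & \mathbf{O}\\ \mathbf{O} & \alpha\mathbf{I}\end{bmatrix}\mathbf{B}^T,
\end{displaymath}
and
\begin{displaymath}
\widetilde{\mathbf{M}}(\boldsymbol{\mu})+\alpha(\mathbf{I}-\mathbf{P}\mathbf{P}^T)\mathbf{T}^{-1}(\mathbf{I}-\mathbf{P}\mathbf{P}^T)=\mathbf{B}\begin{bmatrix}\mathbf{A}_{11}^{-1} & \mathbf{O}\\ \mathbf{O} & \alpha\mathbf{Q}^T\mathbf{T}^{-1}\mathbf{Q}\end{bmatrix}\mathbf{B}^T.
\end{displaymath}
I will verify these identities directly. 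They follow from $\widetilde{\mathbf{M}}(\boldsymbol{\mu})=\mathbf{P}\mathbf{A}_{11}^{-1}\mathbf{P}^T$, from $\mathbf{I}-\mathbf{P}\mathbf{P}^T=\mathbf{Q}\mathbf{Q}^T$, and from $\mathbf{Q}^T\mathbf{Q}=\mathbf{I}$, which holds because $\mathbf{B}$ is orthogonal.

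Since $\mathbf{B}$ is orthogonal and hence nonsingular, each matrix is nonsingular if and only if its block-diagonal middle factor is. That factor is nonsingular exactly when both diagonal blocks are. The block $\mathbf{A}_{11}^{-1}$ exists and is nonsingular by the assumption on $\mathbf{A}_{11}$. For Type II, the block $\alpha\mathbf{I}$ is nonsingular because $\alpha\neq0$, which finishes that case.

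For Type III, the remaining task is to show that $\mathbf{Q}^T\mathbf{T}^{-1}\mathbf{Q}$ is nonsingular. This is the one step needing the definiteness hypothesis, although the argument is short. Suppose first that $\mathbf{T}$ is positive definite, meaning $\mathbf{x}^T\mathbf{T}\mathbf{x}>0$ for $\mathbf{x}\neq\mathbf{0}$, with $\mathbf{T}$ not necessarily symmetric. Then $\mathbf{T}$ is nonsingular. Moreover $\mathbf{T}^{-1}$ is positive definite, since with $\mathbf{x}=\mathbf{T}\mathbf{y}$ we get $\mathbf{x}^T\mathbf{T}^{-1}\mathbf{x}=\mathbf{y}^T\mathbf{T}^T\mathbf{y}=\mathbf{y}^T\mathbf{T}\mathbf{y}>0$.

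Next, $\mathbf{Q}$ has orthonormal columns and so has full column rank. By \cref{lem:positive-definite-compression}, $\mathbf{Q}^T\mathbf{T}^{-1}\mathbf{Q}$ is positive definite. A positive definite matrix is nonsingular, because $\mathbf{z}^T\mathbf{K}\mathbf{z}>0$ rules out $\mathbf{K}\mathbf{z}=\mathbf{0}$ with $\mathbf{z}\neq\mathbf{0}$.

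If instead $\mathbf{T}$ is negative definite, apply the same argument to $-\mathbf{T}$. This gives that $-\mathbf{Q}^T\mathbf{T}^{-1}\mathbf{Q}$ is positive definite, so $\mathbf{Q}^T\mathbf{T}^{-1}\mathbf{Q}$ is nonsingular. Multiplying by $\alpha\neq0$ preserves nonsingularity, so the Type III middle factor is nonsingular as well.

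Both preconditioners $\mathbf{M}_{(2)}(\boldsymbol{\mu})$ and $\mathbf{M}_{(3)}(\boldsymbol{\mu})$ are therefore well defined and nonsingular, being inverses of nonsingular matrices.
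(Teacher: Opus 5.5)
Your proof is correct and follows the paper's argument: you conjugate by the orthogonal matrix $\mathbf{B}=[\mathbf{P},\mathbf{Q}]$ to reach the block-diagonal form and apply \cref{lem:positive-definite-compression} to $\mathbf{Q}^T\mathbf{T}^{-1}\mathbf{Q}$. The paper phrases the block step via the determinant $\alpha^{N_h-N}\det(\mathbf{A}_{11}^{-1})\det(\mathbf{Q}^T\mathbf{T}^{-1}\mathbf{Q})$ rather than blockwise invertibility, and your explicit check that $\mathbf{T}^{-1}$ inherits definiteness for nonsymmetric $\mathbf{T}$ fills in a step the paper only asserts.
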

\begin{proof}
    From \cref{eq:block-inverse-approximation} and \cref{eq:arb-inverse-approximation}, we have
    \begin{displaymath}
\begin{aligned}
    \det\left(\widetilde{\mathbf{M}}(\boldsymbol{\mu})+\alpha\mathbf{Q}\mathbf{Q}^T\right)&=\det\left(\mathbf{B}\widetilde{A_{\mathbf{B}}}^{-1}\mathbf{B}^T\right)=\det\left(\widetilde{A_{\mathbf{B}}}^{-1}\right)\\
    &=\alpha^{N_h-N}\det\left(\mathbf{A}_{11}^{-1}\right)\neq 0,
\end{aligned}
    \end{displaymath}
so Type II is nonsingular. Similarly, for Type III, we have
\begin{displaymath}
    \det\left(\widetilde{\mathbf{M}}(\boldsymbol{\mu})+\alpha\mathbf{Q}\mathbf{Q}^T\mathbf{T}^{-1}\mathbf{Q}\mathbf{Q}^T\right)=\alpha^{N_h-N}\det\left(\mathbf{A}_{11}^{-1}\right)\det\left( \mathbf{Q}^T\mathbf{T}^{-1}\mathbf{Q} \right).
\end{displaymath}
As $\mathbf{T}$ is definite, it follows that $\mathbf{T}^{-1}$ is definite. If $\mathbf{T}^{-1}$ is positive definite, then \cref{lem:positive-definite-compression} implies that $\mathbf{Q}^T\mathbf{T}^{-1}\mathbf{Q}$ is positive definite. If $\mathbf{T}^{-1}$ is negative definite, the same argument applied to $-\mathbf{T}^{-1}$ shows that $\mathbf{Q}^T\mathbf{T}^{-1}\mathbf{Q}$ is negative definite. In both cases, $\det\left( \mathbf{Q}^T\mathbf{T}^{-1}\mathbf{Q} \right)\neq 0$.
\end{proof}
\begin{remark}
The Type III preconditioner requires a choice of matrix or operator $\mathbf{T}$.
For example, when the diagonal entries of $\mathbf{A}_h(\boldsymbol{\mu})$ are all nonzero and have the same sign, one may take $\mathbf{T}=\mathbf{D}(\boldsymbol{\mu})$, where $\mathbf{D}(\boldsymbol{\mu})=\operatorname{diag}(\mathbf{A}_h(\boldsymbol{\mu}))$ denotes the diagonal matrix formed from the diagonal entries of $\mathbf{A}_h(\boldsymbol{\mu})$. Type II can also be regarded as Type III with $\mathbf{T}=\mathbf{I}$.
\end{remark}

\subsection{Dynamic construction of ARB preconditioners}\label{sec:dynamic-arb-construction}
Although the ARB preconditioners generally ensure nonsingularity and can be easily applied to any iterative solver, we must still assess whether they are effective. Each ARB preconditioner has two parts: $\widetilde{\mathbf{M}}(\boldsymbol{\mu})$ comes directly from the RB method, while the $\alpha$ term is introduced to ensure nonsingularity. Hence $\alpha$ is usually chosen small, so $\widetilde{\mathbf{M}}(\boldsymbol{\mu})$ dominates the approximation property of the preconditioner. According to the basic theory of the RB method, if the reduced space $V_N$ is well constructed, that is, if the basis $\mathbf{P}$ is well chosen, the RB solution $\mathbf{P}\mathbf{u}_N(\boldsymbol{\mu})=\widetilde{\mathbf{M}}(\boldsymbol{\mu})\mathbf{f}_h(\boldsymbol{\mu})$ can be a good approximation of the full-order solution $\mathbf{u}_h(\boldsymbol{\mu})=\mathbf{A}_h^{-1}(\boldsymbol{\mu})\mathbf{f}_h(\boldsymbol{\mu})$. However, this only means that $\mathbf{u}_h(\boldsymbol{\mu})$ is well approximated in the span of the columns of $\mathbf{P}$; it does not imply that $\widetilde{\mathbf{M}}(\boldsymbol{\mu})$ is a uniformly good approximation of $\mathbf{A}_h^{-1}(\boldsymbol{\mu})$ as an operator. A reduced space trained only on solution snapshots may therefore be insufficient for approximating the corrections encountered later in the iterative process.

At step $k$ of an iterative solution of \eqref{eq:parametrized-linear-system}, the method effectively needs to approximately solve the correction equation
\begin{displaymath}
 \mathbf{A}_h(\boldsymbol{\mu})\mathbf{e}^{(k)}(\boldsymbol{\mu})=\mathbf{r}^{(k)}(\boldsymbol{\mu}),
\end{displaymath}
and update the iterative solution as
\begin{displaymath}
 \mathbf{u}^{(k+1)}(\boldsymbol{\mu})=\mathbf{u}^{(k)}(\boldsymbol{\mu})+\mathbf{e}^{(k)}(\boldsymbol{\mu}),
\end{displaymath}
where $\mathbf{u}^{(k)}(\boldsymbol{\mu})$ is the $k$-th iterative solution and $\mathbf{r}^{(k)}(\boldsymbol{\mu})$ is the corresponding residual at step $k$.
This motivates constructing the reduced space for the RB-type preconditioner $\widetilde{\mathbf{M}}(\boldsymbol{\mu})$ to approximate the correction $\mathbf{e}^{(k)}(\boldsymbol{\mu})$ at step $k$.
Because subsequent corrections may not be adequately represented by the initial reduced space, we use iteration-dependent basis matrices; we denote by $\mathbf{P}_k$ the matrix used at the $k$-th iteration. The construction of $\mathbf{P}_k$ can be carried out similarly using the POD method (\cref{alg:pod}), provided that snapshots of $\mathbf{e}^{(k)}(\boldsymbol{\mu}_i)$ for training parameters $\boldsymbol{\mu}_i \in \Xi_{\text{train}} \subset \mathcal{D}$ are collected during the offline phase. Since such dynamic construction depends on the chosen iterative solver, we discuss the details in \cref{sec:arb-fgmres}, where FGMRES is adopted as the iterative solver.

A similar idea is used in the MSRB preconditioner \cite{dalsanto2017investigation, santo2018multi, dalsanto2019stokes}, but the ARB construction differs. We directly add a simple correction term to ensure nonsingularity, making the method easier to understand and implement.

\subsection{ARB preconditioners for Richardson method}\label{sec:arb-richardson}
In this subsection, we use the Richardson iteration as a simple example to analyze the proposed preconditioners. For simplicity, the Type I form is employed. When the reduced space at step $k$ is spanned by $\mathbf{P}_k$, we write
\begin{displaymath}
    \mathbf{A}_{N,k}(\boldsymbol{\mu})=\mathbf{P}_k^T\mathbf{A}_h(\boldsymbol{\mu})\mathbf{P}_k,\qquad
    \mathbf{M}_k^{-1}(\boldsymbol{\mu})=\mathbf{P}_k\mathbf{A}_{N,k}^{-1}(\boldsymbol{\mu})\mathbf{P}_k^T+\alpha\mathbf{I}.
\end{displaymath}
We set the relaxation parameter to $1$. The ARB-preconditioned Richardson iteration for system \cref{eq:parametrized-linear-system} then becomes
\begin{equation}\label{eq:arb-richardson-iteration}
        \mathbf{u}^{(k+1)}(\boldsymbol{\mu})=\mathbf{u}^{(k)}(\boldsymbol{\mu})+\mathbf{M}_k^{-1}(\boldsymbol{\mu})\mathbf{r}^{(k)}(\boldsymbol{\mu}),\quad k=0, 1, 2, \ldots,
\end{equation}
where $\mathbf{u}^{(k)}(\boldsymbol{\mu})$ is the $k$-th iterative solution and $\mathbf{r}^{(k)}(\boldsymbol{\mu})$ is the corresponding residual at step $k$. For a fixed $\boldsymbol{\mu}$, denote the exact algebraic error by
\begin{displaymath}
    \mathbf{e}^{(k)}(\boldsymbol{\mu}):=\mathbf{u}_h(\boldsymbol{\mu})-\mathbf{u}^{(k)}(\boldsymbol{\mu}),
\end{displaymath}
then $\mathbf{e}^{(k)}(\boldsymbol{\mu})$ satisfies the correction equation
\begin{displaymath}
\mathbf{A}_h(\boldsymbol{\mu})\mathbf{e}^{(k)}(\boldsymbol{\mu})=\mathbf{r}^{(k)}(\boldsymbol{\mu}),\quad k=0, 1, 2, \ldots.
\end{displaymath}
Thus $\mathbf{P}_k$ should be chosen so that $\mathbf{M}_k^{-1}\mathbf{r}^{(k)}$ approximates this correction effectively.

\begin{proposition}\label{prop:richardson-error-bound}
    Fix a vector norm and its induced matrix norm, both denoted by $\|\cdot\|$, and a parameter $\boldsymbol{\mu}\in\mathcal{D}$. Suppose that, for prescribed tolerances $\epsilon_k$, the matrices $\mathbf{P}_{k-1}$ satisfy
    \begin{displaymath}
      \left\| \mathbf{e}^{(k-1)}(\boldsymbol{\mu})- \mathbf{P}_{k-1}\mathbf{A}_{N,k-1}^{-1}(\boldsymbol{\mu})\mathbf{P}_{k-1}^T\mathbf{A}_h(\boldsymbol{\mu})\mathbf{e}^{(k-1)}(\boldsymbol{\mu}) \right\|\leq\epsilon_k \left\| \mathbf{e}^{(k-1)}(\boldsymbol{\mu}) \right\|, ~k=1,2,\ldots.
    \end{displaymath}
Then
\begin{equation}\label{eq:richardson-error-bound}
            \left\|\mathbf{e}^{(k)}(\boldsymbol{\mu})\right\|\leq \left( \prod_{s=1}^k\left(\epsilon_s+C \right) \right) \left\|  \mathbf{e}^{(0)}(\boldsymbol{\mu})\right\|,\quad k=1,2,\ldots,
\end{equation}
where $C=|\alpha|\|\mathbf{A}_h(\boldsymbol{\mu})\|$.
\end{proposition}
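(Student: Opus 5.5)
The plan is to derive a one-step error recursion from \cref{eq:arb-richardson-iteration} and then iterate it.

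\textbf{Error recursion.} Subtract the iteration from the identity $\mathbf{u}_h(\boldsymbol{\mu})=\mathbf{u}_h(\boldsymbol{\mu})$. Using $\mathbf{r}^{(k-1)}(\boldsymbol{\mu})=\mathbf{A}_h(\boldsymbol{\mu})\mathbf{e}^{(k-1)}(\boldsymbol{\mu})$ (the correction equation stated just before the proposition), this gives
\begin{displaymath}
\mathbf{e}^{(k)}=\mathbf{e}^{(k-1)}-\mathbf{M}_{k-1}^{-1}\mathbf{A}_h\mathbf{e}^{(k-1)}=\left(\mathbf{I}-\mathbf{M}_{k-1}^{-1}\mathbf{A}_h\right)\mathbf{e}^{(k-1)},
\end{displaymath}
where the dependence on $\boldsymbol{\mu}$ is suppressed. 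Substituting the explicit form $\mathbf{M}_{k-1}^{-1}=\mathbf{P}_{k-1}\mathbf{A}_{N,k-1}^{-1}\mathbf{P}_{k-1}^T+\alpha\mathbf{I}$ splits the error into two pieces:
\begin{displaymath}
\mathbf{e}^{(k)}=\left(\mathbf{e}^{(k-1)}-\mathbf{P}_{k-1}\mathbf{A}_{N,k-1}^{-1}\mathbf{P}_{k-1}^T\mathbf{A}_h\mathbf{e}^{(k-1)}\right)-\alpha\mathbf{A}_h\mathbf{e}^{(k-1)}.
\end{displaymath}
Note that only the inverse formula for $\mathbf{M}_k^{-1}$ enters. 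Nonsingularity of $\mathbf{M}_k$ is not needed, since the iteration only applies $\mathbf{M}_k^{-1}$.

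\textbf{One-step bound.} Apply the triangle inequality to the split above. The first bracket is bounded by $\epsilon_k\|\mathbf{e}^{(k-1)}\|$ directly from the hypothesis. The second term is bounded by $|\alpha|\|\mathbf{A}_h\|\,\|\mathbf{e}^{(k-1)}\|=C\|\mathbf{e}^{(k-1)}\|$, using that the matrix norm is induced by the vector norm. Together these give
\begin{displaymath}
\|\mathbf{e}^{(k)}\|\leq(\epsilon_k+C)\|\mathbf{e}^{(k-1)}\|.
\end{displaymath}

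\textbf{Induction.} A straightforward induction on $k$ then yields \cref{eq:richardson-error-bound}. This works because every factor $\epsilon_s+C$ is nonnegative, so multiplying inequalities preserves their direction.

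The only point needing care is the index bookkeeping: the tolerance $\epsilon_k$ is attached to $\mathbf{P}_{k-1}$, which is the basis used to pass from step $k-1$ to step $k$. I do not expect any genuine obstacle beyond that.
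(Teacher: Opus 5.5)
Your proposal is correct and follows the paper's own proof essentially step for step. Both derive the recursion $\mathbf{e}^{(k)}=(\mathbf{I}-\mathbf{M}_{k-1}^{-1}\mathbf{A}_h)\mathbf{e}^{(k-1)}$, split it using $\mathbf{M}_{k-1}^{-1}=\mathbf{P}_{k-1}\mathbf{A}_{N,k-1}^{-1}\mathbf{P}_{k-1}^T+\alpha\mathbf{I}$, bound the two pieces by $\epsilon_k\|\mathbf{e}^{(k-1)}\|$ and $|\alpha|\|\mathbf{A}_h\|\|\mathbf{e}^{(k-1)}\|$, and then recurse.
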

\begin{proof}
    Observe that
\begin{displaymath}
\begin{aligned}
        \mathbf{e}^{(k)}(\boldsymbol{\mu})&=\mathbf{u}_h(\boldsymbol{\mu})-\mathbf{u}^{(k-1)}(\boldsymbol{\mu})+\mathbf{u}^{(k-1)}(\boldsymbol{\mu})-\mathbf{u}^{(k)}(\boldsymbol{\mu})\\
        &=\mathbf{e}^{(k-1)}-\mathbf{M}_{k-1}^{-1}(\boldsymbol{\mu})\mathbf{r}^{(k-1)}(\boldsymbol{\mu})=\left( \mathbf{I}- \mathbf{M}_{k-1}^{-1}(\boldsymbol{\mu})\mathbf{A}_h(\boldsymbol{\mu}) \right)\mathbf{e}^{(k-1)}(\boldsymbol{\mu}).
\end{aligned}
\end{displaymath}
Then
\begin{displaymath}
 \begin{aligned}
        \left\| \mathbf{e}^{(k)}(\boldsymbol{\mu}) \right\|&=\left\| \left( \mathbf{I}- \mathbf{M}_{k-1}^{-1}(\boldsymbol{\mu})\mathbf{A}_h(\boldsymbol{\mu}) \right)\mathbf{e}^{(k-1)}(\boldsymbol{\mu}) \right\|\\
        &=\left\| \mathbf{e}^{(k-1)}(\boldsymbol{\mu})- \mathbf{P}_{k-1}\mathbf{A}_{N,k-1}^{-1}(\boldsymbol{\mu})\mathbf{P}_{k-1}^T\mathbf{A}_h(\boldsymbol{\mu})\mathbf{e}^{(k-1)}(\boldsymbol{\mu}) \right. \\&\quad\left.-\alpha\mathbf{A}_h(\boldsymbol{\mu})\mathbf{e}^{(k-1)}(\boldsymbol{\mu})  \right\|\\
        &\leq \left( \epsilon_k+|\alpha|\left\|\mathbf{A}_h(\boldsymbol{\mu}) \right\| \right) \left\| \mathbf{e}^{(k-1)}(\boldsymbol{\mu}) \right\|.
 \end{aligned}
\end{displaymath}
Proceeding recursively, we obtain \cref{eq:richardson-error-bound}.
\end{proof}
A corresponding residual result also holds.
\begin{proposition}\label{prop:richardson-residual-bound}
    Fix a vector norm and its induced matrix norm, both denoted by $\|\cdot\|$, and a parameter $\boldsymbol{\mu}\in\mathcal{D}$. Suppose that, for prescribed tolerances $\epsilon_k$, the matrices $\mathbf{P}_{k-1}$ satisfy
    \begin{displaymath}
      \left\| \mathbf{r}^{(k-1)}(\boldsymbol{\mu})- \mathbf{A}_h(\boldsymbol{\mu})\mathbf{P}_{k-1}\mathbf{A}_{N,k-1}^{-1}(\boldsymbol{\mu})\mathbf{P}_{k-1}^T\mathbf{r}^{(k-1)}(\boldsymbol{\mu}) \right\|\leq\epsilon_k \left\| \mathbf{r}^{(k-1)}(\boldsymbol{\mu}) \right\|,~ k=1,2,\ldots.
    \end{displaymath}
Then
\begin{equation}\label{eq:richardson-residual-bound}
            \left\|\mathbf{r}^{(k)}(\boldsymbol{\mu})\right\|\leq \left( \prod_{s=1}^k\left(\epsilon_s+C \right) \right) \left\|  \mathbf{r}^{(0)}(\boldsymbol{\mu})\right\|,\quad k=1,2,\ldots,
\end{equation}
where $C=|\alpha|\|\mathbf{A}_h(\boldsymbol{\mu})\|$.
\end{proposition}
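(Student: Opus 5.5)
The plan is to mirror the proof of \cref{prop:richardson-error-bound}, working with residuals instead of errors. First I derive a one-step recursion for the residual. From \cref{eq:arb-richardson-iteration},
\begin{displaymath}
\mathbf{r}^{(k)}(\boldsymbol{\mu})=\mathbf{f}_h(\boldsymbol{\mu})-\mathbf{A}_h(\boldsymbol{\mu})\mathbf{u}^{(k)}(\boldsymbol{\mu})
=\mathbf{r}^{(k-1)}(\boldsymbol{\mu})-\mathbf{A}_h(\boldsymbol{\mu})\mathbf{M}_{k-1}^{-1}(\boldsymbol{\mu})\mathbf{r}^{(k-1)}(\boldsymbol{\mu}),
\end{displaymath}
so that
\begin{displaymath}
\mathbf{r}^{(k)}(\boldsymbol{\mu})=\left(\mathbf{I}-\mathbf{A}_h(\boldsymbol{\mu})\mathbf{M}_{k-1}^{-1}(\boldsymbol{\mu})\right)\mathbf{r}^{(k-1)}(\boldsymbol{\mu}).
\end{displaymath}
This is the left-multiplied counterpart of the error propagation operator $\mathbf{I}-\mathbf{M}_{k-1}^{-1}\mathbf{A}_h$ used in \cref{prop:richardson-error-bound}.

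Next, I insert the Type I form $\mathbf{M}_{k-1}^{-1}=\mathbf{P}_{k-1}\mathbf{A}_{N,k-1}^{-1}\mathbf{P}_{k-1}^T+\alpha\mathbf{I}$. This splits the residual into two pieces:
\begin{displaymath}
\mathbf{r}^{(k)}=\left(\mathbf{r}^{(k-1)}-\mathbf{A}_h\mathbf{P}_{k-1}\mathbf{A}_{N,k-1}^{-1}\mathbf{P}_{k-1}^T\mathbf{r}^{(k-1)}\right)-\alpha\mathbf{A}_h\mathbf{r}^{(k-1)}.
\end{displaymath}
I bound each piece separately and combine them with the triangle inequality:
\begin{itemize}
\item the first piece is exactly the quantity controlled by the hypothesis, so it is at most $\epsilon_k\|\mathbf{r}^{(k-1)}\|$;
\item the second piece is at most $|\alpha|\|\mathbf{A}_h\|\|\mathbf{r}^{(k-1)}\|$, by the submultiplicativity of the induced norm.
\end{itemize}
Together these give $\|\mathbf{r}^{(k)}\|\le(\epsilon_k+C)\|\mathbf{r}^{(k-1)}\|$ with $C=|\alpha|\|\mathbf{A}_h(\boldsymbol{\mu})\|$.

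Finally, a straightforward induction on $k$, applying this one-step bound with $s=k,k-1,\ldots,1$, yields \cref{eq:richardson-residual-bound}.

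There is no real obstacle. The only point needing care is the order of multiplication: $\mathbf{A}_h$ must sit on the left of $\mathbf{M}_{k-1}^{-1}$. With that ordering, the hypothesis, which is phrased with $\mathbf{A}_h\mathbf{P}_{k-1}\mathbf{A}_{N,k-1}^{-1}\mathbf{P}_{k-1}^T$ acting on the residual, matches the first term exactly. The $\alpha$ term also lands as $\alpha\mathbf{A}_h\mathbf{r}^{(k-1)}$, which produces the same constant $C$ as in \cref{prop:richardson-error-bound}.
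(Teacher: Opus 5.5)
Your proposal is correct and matches the paper's proof essentially step for step: derive $\mathbf{r}^{(k)}=(\mathbf{I}-\mathbf{A}_h\mathbf{M}_{k-1}^{-1})\mathbf{r}^{(k-1)}$, substitute $\mathbf{M}_{k-1}^{-1}=\mathbf{P}_{k-1}\mathbf{A}_{N,k-1}^{-1}\mathbf{P}_{k-1}^T+\alpha\mathbf{I}$, bound the two resulting terms by the hypothesis and by $|\alpha|\|\mathbf{A}_h\|\|\mathbf{r}^{(k-1)}\|$, and recurse. One small correction: the second bound uses the defining property $\|\mathbf{A}\mathbf{x}\|\le\|\mathbf{A}\|\|\mathbf{x}\|$ of the induced norm, not submultiplicativity, though this does not affect the argument.
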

\begin{proof}
From \cref{eq:arb-richardson-iteration}, the residual at iteration $k$ is
 \begin{displaymath}
     \mathbf{r}^{(k)}(\boldsymbol{\mu})=\mathbf{f}_h(\boldsymbol{\mu})-\mathbf{A}_h(\boldsymbol{\mu})\mathbf{u}^{(k)}(\boldsymbol{\mu})=\mathbf{r}^{(k-1)}(\boldsymbol{\mu})-\mathbf{A}_h(\boldsymbol{\mu})\mathbf{M}^{-1}_{k-1}(\boldsymbol{\mu})\mathbf{r}^{(k-1)}(\boldsymbol{\mu}).
 \end{displaymath}
 Then
\begin{displaymath}
\begin{aligned}
        \left\| \mathbf{r}^{(k)}(\boldsymbol{\mu}) \right\|&=\left\|  \mathbf{r}^{(k-1)}(\boldsymbol{\mu})-\mathbf{A}_h(\boldsymbol{\mu})\mathbf{M}^{-1}_{k-1}(\boldsymbol{\mu})\mathbf{r}^{(k-1)}(\boldsymbol{\mu}) \right\|\\
        &=\left\|\left( \mathbf{I} - \mathbf{A}_h(\boldsymbol{\mu})\mathbf{P}_{k-1}\mathbf{A}_{N,k-1}^{-1}(\boldsymbol{\mu})\mathbf{P}_{k-1}^T-\alpha\mathbf{A}_h(\boldsymbol{\mu}) \right) \mathbf{r}^{(k-1)}(\boldsymbol{\mu})  \right\|\\
        &\leq \left(\epsilon_k+|\alpha|\|\mathbf{A}_h(\boldsymbol{\mu}) \|  \right)\left\| \mathbf{r}^{(k-1)}(\boldsymbol{\mu}) \right\|.
\end{aligned}
\end{displaymath}
 Proceeding recursively, we obtain \cref{eq:richardson-residual-bound}.
\end{proof}
\begin{remark}
    We emphasize that a greedy construction can enforce the hypotheses of \cref{prop:richardson-error-bound} and \cref{prop:richardson-residual-bound} for the training parameters when the corresponding error or residual tolerances $\epsilon_k$ are prescribed. When POD is used to construct the RB spaces, as in our numerical experiments, the POD tolerance controls an aggregate projection error over the snapshot set. The resulting POD spaces can be assessed numerically through the observed convergence behavior.
\end{remark}

\subsection{ARB preconditioners for FGMRES}\label{sec:arb-fgmres}
In the previous sections, we constructed the ARB preconditioners and analyzed their convergence properties using the Richardson iteration for illustrative purposes. To employ a more efficient Krylov subspace method in practical applications, we now consider FGMRES \cite{saad1993flexible, saad2003iterative}. This choice is particularly well suited to our approach, as the ARB preconditioner changes during the iterations, and FGMRES is specifically designed to handle such iteration-dependent operators.

For simplicity, we adopt the standard FGMRES algorithm as described in \cite{saad1993flexible, saad2003iterative}. In our implementation, we use the Euclidean norm of the relative residual of the iterative solution as the stopping criterion, requiring it to be smaller than a prescribed tolerance $\epsilon_{\text{rtol}}$.
\begin{algorithm}[t]
  \caption{FGMRES (as formulated in \cite{saad2003iterative})}\label{alg:fgmres}
\begin{algorithmic}[1]
    \STATE Compute $\mathbf{r}_0 = \mathbf{b} - \mathbf{A}\mathbf{u}_0$, $\beta = \|\mathbf{r}_0\|_2$, and $\mathbf{v}_1 = \mathbf{r}_0 / \beta$
    \FOR{$k = 1,\dots,m$}
      \STATE  Compute $\mathbf{z}_k = \mathbf{M}_k^{-1}\mathbf{v}_k$
      \STATE  Compute $\mathbf{w} = \mathbf{A}\mathbf{z}_k$
      \FOR{$j = 1,\dots,k$}
        \STATE  $h_{j,k} = (\mathbf{w}, \mathbf{v}_j)$
        \STATE  $\mathbf{w} = \mathbf{w} - h_{j,k}\mathbf{v}_j$
      \ENDFOR
      \STATE  Compute $h_{k+1,k} = \|\mathbf{w}\|_2$ and $\mathbf{v}_{k+1} = \mathbf{w} / h_{k+1,k}$
      \STATE  Define $\mathbf{Z}_m = [\mathbf{z}_1,\ldots,\mathbf{z}_m]$, $\overline{\mathbf{H}}_m = \{h_{j,k}\}_{1 \le j \le k+1; 1 \le k \le m}$
    \ENDFOR
    \STATE  Compute $\mathbf{y}_m = \text{argmin}_{\mathbf{y}\in\mathbb{R}^m}\left\|\beta\mathbf{e}_1 - \overline{\mathbf{H}}_m\mathbf{y}\right\|_2$ and $\mathbf{u}_m = \mathbf{u}_0 + \mathbf{Z}_m\mathbf{y}_m$
    \STATE  If satisfied Stop, else set $\mathbf{u}_0 \leftarrow \mathbf{u}_m$ and GoTo 1
\end{algorithmic}
\end{algorithm}

In \cref{alg:fgmres}, $\mathbf{M}_k$ is the preconditioner at iteration $k$. Since its inverse is applied to $\mathbf{v}_k$, $\mathbf{P}_k$ should be constructed for the problem $\mathbf{A}\mathbf{x}_k=\mathbf{v}_k$. In the ARB case, the $k$-th RB space should therefore be trained to solve
\begin{displaymath}
\mathbf{A}_h(\boldsymbol{\mu})\mathbf{x}_k(\boldsymbol{\mu})=\mathbf{v}_k(\boldsymbol{\mu}).
\end{displaymath}
For $k=1$, we have
\begin{displaymath}
\mathbf{x}_1(\boldsymbol{\mu})=\mathbf{A}_h^{-1}(\boldsymbol{\mu})\mathbf{v}_1(\boldsymbol{\mu})=\frac{1}{\beta(\boldsymbol{\mu})}\mathbf{A}_h^{-1}(\boldsymbol{\mu})\mathbf{r}_0(\boldsymbol{\mu})=\frac{1}{\beta(\boldsymbol{\mu})}(\mathbf{u}_h(\boldsymbol{\mu}) -\mathbf{u}_0).
\end{displaymath}
From \cref{alg:fgmres}, we have
\begin{displaymath}
    \mathbf{v}_{k+1}(\boldsymbol{\mu})=\frac{1}{h_{k+1, k}(\boldsymbol{\mu})}\Bigg( \mathbf{A}_h(\boldsymbol{\mu})\mathbf{M}_k^{-1}(\boldsymbol{\mu})\mathbf{v}_k(\boldsymbol{\mu})-\sum_{j=1}^kh_{j,k}(\boldsymbol{\mu})\mathbf{v}_j(\boldsymbol{\mu}) \Bigg),\quad k=1,2,\ldots,
\end{displaymath}
Therefore, we have
\begin{displaymath}
\begin{aligned}
\mathbf{x}_{k+1}(\boldsymbol{\mu})&=\mathbf{A}_h^{-1}(\boldsymbol{\mu})\frac{1}{h_{k+1,k}(\boldsymbol{\mu})}\Bigg( \mathbf{A}_h(\boldsymbol{\mu})\mathbf{M}_k^{-1}(\boldsymbol{\mu})\mathbf{v}_k(\boldsymbol{\mu})-\sum_{j=1}^kh_{j,k}(\boldsymbol{\mu})\mathbf{v}_j(\boldsymbol{\mu}) \Bigg)\\
&=\frac{1}{h_{k+1,k}(\boldsymbol{\mu})}\Bigg(\mathbf{M}_k^{-1}(\boldsymbol{\mu})\mathbf{v}_k(\boldsymbol{\mu})-\sum_{j=1}^kh_{j,k}(\boldsymbol{\mu})\mathbf{A}_h^{-1}(\boldsymbol{\mu})\mathbf{v}_j(\boldsymbol{\mu}) \Bigg)\\
&=\frac{1}{h_{k+1,k}(\boldsymbol{\mu})}\Bigg( \mathbf{z}_k(\boldsymbol{\mu})-\sum_{j=1}^kh_{j,k}(\boldsymbol{\mu})\mathbf{x}_j(\boldsymbol{\mu}) \Bigg),\quad k=1,2,\ldots.
\end{aligned}
\end{displaymath}
This gives the recursive formula
\begin{equation}\label{eq:fgmres-snapshot-recursion}
\begin{cases}
\beta(\boldsymbol{\mu})=\|\mathbf{f}_h(\boldsymbol{\mu})-\mathbf{A}_h(\boldsymbol{\mu})\mathbf{u}_0 \|_2,\\
\mathbf{x}_1(\boldsymbol{\mu})=\frac{1}{\beta(\boldsymbol{\mu}) }  \left(\mathbf{u}_h(\boldsymbol{\mu})-\mathbf{u}_0\right),\\
\mathbf{x}_{k+1}(\boldsymbol{\mu})=\frac{1}{h_{k+1,k}(\boldsymbol{\mu})}\left( \mathbf{z}_k(\boldsymbol{\mu})-\sum_{j=1}^kh_{j,k}(\boldsymbol{\mu})\mathbf{x}_j(\boldsymbol{\mu}) \right),\quad k\geq 1.
\end{cases}
\end{equation}
For each training parameter, \cref{eq:fgmres-snapshot-recursion} requires a large-scale solve only in the first step to obtain the initial snapshot; the remaining snapshots are obtained by subsequent recursive steps. This keeps the offline computation time acceptable in practice. 

We now give the complete offline algorithm in \cref{alg:offline}, which describes the dynamic construction of RB spaces during the offline phase. In \cref{alg:offline}, the function POD($\cdot$) in lines 6 and 18 is the one described in \cref{alg:pod}. The solver used in line 4 may be a direct solver or another iterative solver with the desired accuracy. Although lines 2--5 and 8--14 of the algorithm are presented in serial form, in practice one can solve the training problems in parallel, greatly reducing the offline time. Line 9 uses the Type I preconditioner, which can be easily replaced by either the Type II or Type III preconditioner.

\begin{algorithm}[t]
  \caption{Offline Dynamic Generation of RB Spaces}\label{alg:offline}
  \begin{algorithmic}[1]
    \STATE \textbf{procedure} DGRBS$\left(\{\boldsymbol{\mu}_t\}_{t=1}^{n_s}, \mathbf{u}_0, L, \epsilon_{\text{tol}}, \alpha\right)$
\FOR{$t=1, 2, \ldots, n_s$}
\STATE Compute $\mathbf{r}_0(\boldsymbol{\mu}_t) = \mathbf{f}_h(\boldsymbol{\mu}_t) - \mathbf{A}_h(\boldsymbol{\mu}_t)\mathbf{u}_0$, $\beta(\boldsymbol{\mu}_t) = \|\mathbf{r}_0(\boldsymbol{\mu}_t)\|_2$, and $\mathbf{v}_1(\boldsymbol{\mu}_t) = \mathbf{r}_0(\boldsymbol{\mu}_t) / \beta(\boldsymbol{\mu}_t)$
\STATE Compute $\mathbf{u}_h(\boldsymbol{\mu}_t)=\mathbf{A}_h^{-1}(\boldsymbol{\mu}_t)\mathbf{f}_h(\boldsymbol{\mu}_t)$, $\mathbf{x}_1(\boldsymbol{\mu}_t)=(\mathbf{u}_h(\boldsymbol{\mu}_t)-\mathbf{u}_0)/\beta(\boldsymbol{\mu}_t)$
\ENDFOR
\STATE Compute $\mathbf{P}_1=\mathrm{POD}\left(\mathbf{x}_1(\boldsymbol{\mu}_1), \mathbf{x}_1(\boldsymbol{\mu}_2), \ldots, \mathbf{x}_1(\boldsymbol{\mu}_{n_s}); \epsilon_{\text{tol}}\right)$

\FOR{$k=1, 2, \ldots, L-1$}
    \FOR{$t = 1, 2, \dots, n_s$}
      \STATE  Compute $\mathbf{A}_{N,k}(\boldsymbol{\mu}_t)=\mathbf{P}_k^T\mathbf{A}_h(\boldsymbol{\mu}_t)\mathbf{P}_k$ and $\mathbf{z}_k(\boldsymbol{\mu}_t) = \mathbf{P}_k\mathbf{A}_{N,k}^{-1}(\boldsymbol{\mu}_t)\mathbf{P}_k^T\mathbf{v}_k(\boldsymbol{\mu}_t)+\alpha\mathbf{v}_k(\boldsymbol{\mu}_t)$
      \STATE  Compute $\mathbf{w}(\boldsymbol{\mu}_t) = \mathbf{A}_h(\boldsymbol{\mu}_t)\mathbf{z}_k(\boldsymbol{\mu}_t)$
      \FOR{$j = 1,\dots,k$}
        \STATE $h_{j,k}(\boldsymbol{\mu}_t) = (\mathbf{w}(\boldsymbol{\mu}_t), \mathbf{v}_j(\boldsymbol{\mu}_t))$
        \STATE $\mathbf{w}(\boldsymbol{\mu}_t) = \mathbf{w}(\boldsymbol{\mu}_t) - h_{j,k}(\boldsymbol{\mu}_t) \mathbf{v}_j(\boldsymbol{\mu}_t)$
      \ENDFOR
      \STATE Compute $h_{k+1,k}(\boldsymbol{\mu}_t) = \|\mathbf{w}(\boldsymbol{\mu}_t)\|_2$ and $\mathbf{v}_{k+1}(\boldsymbol{\mu}_t) = \mathbf{w}(\boldsymbol{\mu}_t) / h_{k+1,k}(\boldsymbol{\mu}_t)$
      \STATE Compute $\mathbf{x}_{k+1}(\boldsymbol{\mu}_t)=\frac{1}{h_{k+1,k}(\boldsymbol{\mu}_t)}\left( \mathbf{z}_k(\boldsymbol{\mu}_t)-\sum_{j=1}^kh_{j,k}(\boldsymbol{\mu}_t)\mathbf{x}_j(\boldsymbol{\mu}_t) \right)$
    \ENDFOR
\STATE  Compute $\mathbf{P}_{k+1}=\text{POD}\left(\mathbf{x}_{k+1}(\boldsymbol{\mu}_1),\ldots,\mathbf{x}_{k+1}(\boldsymbol{\mu}_{n_s}); \epsilon_{\text{tol}}\right)$
\ENDFOR

    \STATE Save $\mathbf{P}_1, \mathbf{P}_2,\ldots, \mathbf{P}_{L}$
     \STATE \textbf{end procedure}
  \end{algorithmic}
\end{algorithm}

\begin{remark}
   In \cref{alg:offline}, we construct only $L$ reduced spaces $\mathbf{P}_k$, $k=1, 2,\ldots, L$. Substituting them into the preconditioner definitions yields the corresponding ARB preconditioner used by FGMRES at the $k$-th iteration. If the number of FGMRES iterations exceeds $L$, we fix $\mathbf{P}_{L}$ in the preconditioners for the remaining iterations.
\end{remark}

\section{Numerical experiments}\label{sec:numerical-experiments}
In this section, we compare the proposed ARB-preconditioned FGMRES method with standard GMRES equipped with two classical algebraic preconditioners: ILU($k$) and BoomerAMG. Such algebraic and multilevel preconditioners serve as standard baselines for large sparse PDE systems \cite{saad2003iterative, wathen2015preconditioning}. The POD tolerance $\epsilon_{\text{tol}}$ used in the offline phase is set to $10^{-3}$, and the convergence criterion for the linear solver is set to a relative residual norm of $\epsilon_{\text{rtol}} = 1.0 \times 10^{-7}$. The analysis in \cref{sec:dynamic-arb-construction} and \cref{prop:shifted-nonsingularity} suggests that a small value of $\alpha$ is preferred. Balancing stability and accuracy, we select $\alpha = 10^{-4}$. We perform tests on four benchmark problems: the steady-state convection-diffusion equation, the anisotropic vortex problem, the steady-state Stokes equation, and the Helmholtz equation. For each problem, the fill-in level $k$ of the ILU($k$) preconditioner is chosen as the largest level for which the solve time still improves significantly upon ILU($k-1$); a larger $k$ is not used, since the additional fill-in raises the memory cost while the speed gain becomes marginal. We use the default BoomerAMG settings without additional parameter tuning. For the Stokes problem, BoomerAMG is applied within the field-split preconditioner described in Case 3. Unless otherwise stated, the $n_s$ offline training parameters are generated using a uniform sampling strategy over the prescribed parameter domains. In the online stage, each reported statistic is obtained by solving the same problem for $100$ randomly generated parameters that differ from those used in the offline stage. All numerical experiments are conducted on a personal computer equipped with an i9-14900HX CPU.

\subsection{Case 1: The convection-diffusion equation}
We consider the steady-state convection-diffusion equation, a classical problem studied extensively in \cite{ernst2000residual, stynes2005steady}. 
The governing equation is
\begin{displaymath}
    \begin{cases}
    -\mu\Delta u + \mathbf{b}\cdot\nabla u = f, & \text{in}~\Omega,\\
    u = 0, & \text{on}~\partial\Omega,
    \end{cases}
\end{displaymath}
where $u: \Omega \to \mathbb{R}$ is the scalar field, $\Omega \subset \mathbb{R}^2$ is the domain, $\mathbf{b} \in \mathbb{R}^2$ is a constant convection vector, and $f$ is a given source function. The scalar $\mu > 0$ represents the diffusion coefficient and serves as the parameter characterizing the problem. In the following experiments, we set $\mathbf{b} = [1, 2]^T$, $f \equiv 1$, and define the domain as $\Omega = (0, 1) \times (0, 1)$.

The discrete problem is formulated using continuous piecewise linear Lagrange finite elements ($\mathcal{P}_1$) on a uniform triangular mesh $\mathcal{T}_h$ generated from a $700\times 700$ grid. The number of degrees of freedom is $N_h = 491,401$. However, in the convection-dominated regime (i.e., for small $\mu$), the standard Galerkin method is known to suffer from spurious oscillations. To mitigate this, we employ the Streamline Upwind Petrov-Galerkin (SUPG) stabilization technique \cite{brooks1982streamline, franca2006revisiting}. For $\mathcal{P}_1$ elements, the stabilized discrete weak form is
\begin{displaymath}
    \int_\Omega \mu\nabla u\cdot\nabla v~\mathrm{d}x + \int_\Omega (\mathbf{b}\cdot\nabla u)v~\mathrm{d}x + \sum_{K\in\mathcal{T}_h} \delta_K \int_K (\mathbf{b}\cdot\nabla u)(\mathbf{b}\cdot\nabla v)~\mathrm{d}x = \int_\Omega fv~\mathrm{d}x,
\end{displaymath}
 The stabilization parameter is defined by
\begin{displaymath}
    \delta_K = \frac{h_K}{2\|\mathbf{b}\|_2} \left( \coth(\mathrm{Pe}_K) - \frac{1}{\mathrm{Pe}_K} \right), \quad \text{with} \quad \mathrm{Pe}_K := \frac{\|\mathbf{b}\|_2 h_K}{2\mu}.
\end{displaymath}
The global mesh Péclet number, used to determine whether stabilization is required, is $\mathrm{Pe}(\mu; \mathbf{b}, h) := \frac{\|\mathbf{b}\|_2h}{2\mu}$. For the given problem configuration, the critical threshold corresponds to
\begin{displaymath}
    \mathrm{Pe}(\mu; \mathbf{b}, h) = \frac{\sqrt{5}\sqrt{2}}{1400\mu} = 1 \quad \Longrightarrow \quad \mu = \frac{\sqrt{10}}{1400} \approx 2.25 \times 10^{-3}.
\end{displaymath}
Consequently, the SUPG method is activated when $\mu \leq 2.25 \times 10^{-3}$.

\begin{remark}
    For the convection-diffusion equation, the coefficient matrix $\mathbf{A}_{h}(\boldsymbol{\mu})$ arising from the finite element discretization typically has positive diagonal entries.
Consequently, the diagonal matrix $\mathbf{D}(\boldsymbol{\mu}) = \mathrm{diag}(\mathbf{A}_{h}(\boldsymbol{\mu}))$ is positive definite, which permits the use of the Type III ARB preconditioner with $\mathbf{T} = \mathbf{D}(\boldsymbol{\mu})$.
  \end{remark}

\newcolumntype{R}{>{$}r<{$}}
\newcolumntype{V}[1]{>{[\;}*{#1}{R@{\;\;}}R<{\;]}}
\begin{table}[t]
 \footnotesize
 \captionsetup{position=top}
 \caption{Numerical results for $100$ random online solves of the convection-diffusion equation. Here and throughout, results reported with $\pm$ are given as mean $\pm$ standard deviation.}\label{tab:convection-diffusion}
 \begin{center}
 \subfloat[$\mu\in [0.1, 1\text{]}$: $n_s=40, L=6$. ]{\label{tab:convection-diffusion-moderate}
 \scalebox{0.97}{
\begin{tabular}{|c|c|c|c|c|c|}
\hline
KSP Type
& \multicolumn{2}{c|}{GMRES}
& \multicolumn{3}{c|}{FGMRES}\\
\hline

PC Type
& ILU(2)
& AMG
& $\mathbf{M}_{(1)}$
& $\mathbf{M}_{(2)}$
& $\mathbf{M}_{(3)}$\\
\hline

Iterations
& $357.95\pm53.24$
& $5.05\pm0.22$
& $3.64\pm0.69$
& $3.64\pm0.69$
& $3.49\pm0.56$\\

\hline

$t_{\mathrm{online}}(s)$
& $4.391\pm0.644$
& $0.525\pm0.019$
& $0.159\pm0.023$
& $0.167\pm0.025$
& $0.170\pm0.020$\\
\hline

$t_{\mathrm{offline}}(s)$
& --
& --
& $104.27$
& $106.51$
& $107.12$
\\
\hline
\end{tabular}
 }
 }

 \subfloat[$\mu\in[10^{-5}, 2.25\times 10^{-3}\text{]}$: $n_s=50, L=6$.]{\label{tab:convection-diffusion-dominated}
	\scalebox{0.97}{
\begin{tabular}{|c|c|c|c|c|c|}
\hline
KSP Type
& \multicolumn{2}{c|}{GMRES}
& \multicolumn{3}{c|}{FGMRES}\\
\hline

PC Type
& ILU(2)
& AMG
& $\mathbf{M}_{(1)}$
& $\mathbf{M}_{(2)}$
& $\mathbf{M}_{(3)}$\\
\hline

Iterations
& $6.13\pm 2.31$
& $16.64\pm 4.69$
& $4.06\pm 0.40$
& $4.06\pm 0.40$
& $4.04\pm0.40$\\

\hline

$t_{\mathrm{online}}(\mathrm{s})$
& $0.234\pm 0.034$
& $1.622\pm 0.256$
& $0.217\pm 0.030$
& $0.223\pm 0.031$
& $0.245\pm 0.036$\\

\hline

$t_{\mathrm{offline}}(\mathrm{s})$
& --
& --
& $147.75$
& $149.29$
& $152.05$
\\
\hline
\end{tabular}
 }
 }
 \end{center}
 \end{table}

\begin{table}[t]
\scriptsize
\caption{Speed-up factors for the convection-diffusion equation over the $100$ online solves. Each speed-up is computed as $t_{\mathrm{ref}}/t_{\mathrm{ARB}}$ for the same parameter sample.}\label{tab:convection-diffusion-speedup}
\begin{center}
\setlength{\tabcolsep}{3pt}
\renewcommand{\arraystretch}{1.05}
\resizebox{\textwidth}{!}{
\begin{tabular}{|c|c|c|c|c|c|c|c|c|c|c|}
\hline
{}
& {}
& \multicolumn{3}{c|}{$\mathbf{M}_{(1)}$}
& \multicolumn{3}{c|}{$\mathbf{M}_{(2)}$}
& \multicolumn{3}{c|}{$\mathbf{M}_{(3)}$}\\
\hline
Parameter range
& Reference solver
& Max
& Mean
& Min
& Max
& Mean
& Min
& Max
& Mean
& Min\\
\hline
$\mu\in[0.1,1]$
& GMRES+ILU(2)
& $33.0\times$
& $28.1\times$
& $11.5\times$
& $31.8\times$
& $26.7\times$
& $10.7\times$
& $30.9\times$
& $26.1\times$
& $14.6\times$\\
\hline
$\mu\in[0.1,1]$
& GMRES+AMG
& $4.1\times$
& $3.3\times$
& $1.9\times$
& $3.8\times$
& $3.1\times$
& $1.8\times$
& $3.8\times$
& $3.1\times$
& $2.3\times$\\
\hline
$\mu\in[10^{-5},2.25\times10^{-3}]$
& GMRES+ILU(2)
& $1.3\times$
& $1.1\times$
& $0.9\times$
& $1.3\times$
& $1.1\times$
& $0.9\times$
& $1.2\times$
& $1.0\times$
& $0.8\times$\\
\hline
$\mu\in[10^{-5},2.25\times10^{-3}]$
& GMRES+AMG
& $11.6\times$
& $7.7\times$
& $4.6\times$
& $11.5\times$
& $7.5\times$
& $4.4\times$
& $10.9\times$
& $6.8\times$
& $3.4\times$\\
\hline
\end{tabular}
}
\end{center}
\end{table}

\begin{figure}[t]
\footnotesize
 \centering
 \subfloat[$\mu=1.793571\times10^{-3}$ (the worst case of $\mu\in[10^{-5}, 2.25\times10^{-3}\text{]}$).]{\label{fig:convection-diffusion-dominated-residual}
  \includegraphics[width=0.45\textwidth]{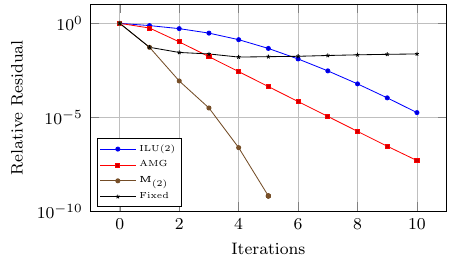}
 }
 \hspace{0.01\textwidth}
 \subfloat[$\mu=0.106088$ (the worst case of $\mu\in[0.1,1\text{]}$).]{\label{fig:convection-diffusion-moderate-residual}
  \includegraphics[width=0.45\textwidth]{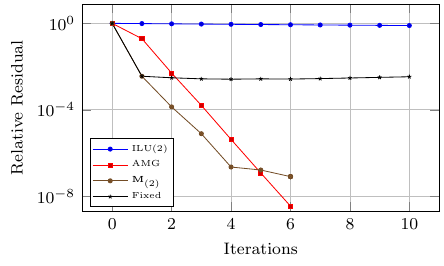}
 }
 \caption{Relative residual norms with different preconditioners for the convection-diffusion equation.}
 \label{fig:convection-diffusion-residuals}
\end{figure}

The results in \cref{tab:convection-diffusion-moderate} correspond to the non-convection-dominated range $\mu \in [0.1, 1]$. In the offline stage, we use $n_s=40$ uniformly sampled training parameters and construct $L=6$ dynamic reduced spaces. In the $100$ online tests, ILU(2) requires $357.95\pm53.24$ iterations on average and $4.391\pm0.644$ seconds per solve. AMG requires $5.05\pm0.22$ iterations and $0.525\pm0.019$ seconds per solve, whereas the ARB preconditioners require only about $3.5$--$3.6$ iterations and $0.16$--$0.17$ seconds. Thus, the proposed preconditioners retain a pronounced online advantage over both ILU(2) and AMG.

The results in \cref{tab:convection-diffusion-dominated} focus exclusively on the convection-dominated, SUPG-stabilized range $\mu \in [10^{-5}, 2.25\times 10^{-3}]$. In the offline stage, we construct $L=6$ dynamic reduced spaces from $n_s=50$ logarithmically sampled parameters; equivalently, the training values form a geometric progression over the interval. In the online stage, the reported results are obtained from $100$ independently and log-uniformly randomly sampled parameters in the same interval, so that each order of magnitude of $\mu$ receives a comparable number of samples. ILU(2) requires $6.13\pm2.31$ iterations and $0.234\pm0.034$ seconds on average, while AMG requires $16.64\pm4.69$ iterations and $1.622\pm0.256$ seconds. The ARB preconditioners reduce the iteration count to about $4.0$, and their online times, $0.217$--$0.245$ seconds, are comparable with that of ILU(2) and substantially faster than AMG.

The speed-up statistics in \cref{tab:convection-diffusion-speedup} make this contrast explicit. In the non-convection-dominated range, the mean speed-ups of the ARB methods are $26.1\times$--$28.1\times$ over ILU(2) and $3.1\times$--$3.3\times$ over AMG. In the SUPG-stabilized convection-dominated range, the mean speed-ups relative to ILU(2) are only $1.0\times$--$1.1\times$, and some samples are slower than ILU(2); relative to AMG, the mean speed-ups are $6.8\times$--$7.7\times$. Note that for the SUPG-stabilized convection-dominated case, ILU(2) is extremely efficient, about $7\times$ faster than AMG. Even in this extreme case, our proposed ARB preconditioners are as efficient as ILU, and for all other tested cases, ARB preconditioners become much more efficient than ILU.

For completeness, we report a break-even estimate for the comparison of Type II preconditioner to AMG. For $\mu\in[0.1,1]$, the offline time is $t_{\mathrm{offline}} = 106.51$ seconds and the online solve time reduction is approximately $0.358$ seconds per solve compared with AMG ($0.167\,\mathrm{s}$ vs $0.525\,\mathrm{s}$). For $\mu \in [10^{-5}, 2.25\times 10^{-3}]$, the offline time is $149.29$ seconds and the online solve time reduction is approximately $1.399$ seconds per solve ($0.223\,\mathrm{s}$ vs $1.622\,\mathrm{s}$). The corresponding break-even points are estimated as
    \begin{displaymath}
        N_{\text{break-even}} = \frac{106.51}{0.358} \approx 298 \text{ solves}, \qquad
        N_{\text{break-even}} = \frac{149.29}{1.399} \approx 107 \text{ solves}.
    \end{displaymath}
    This implies that the Type II ARB preconditioner becomes cost-effective after about $298$ parametrized solves in the moderate-diffusion range and about $107$ solves in the convection-dominated range.

For the 100 random online tests in each parameter range, we select the worst case for the Type II preconditioner and plot the corresponding residual histories in \cref{fig:convection-diffusion-residuals}. Even in these worst cases, the proposed ARB preconditioner converges as rapidly as AMG. In \cref{fig:convection-diffusion-residuals}, the curve labeled Fixed is a control test in which the first ARB preconditioner, namely the one associated with the first reduced space $\mathbf{P}_1$, is kept fixed throughout the FGMRES iteration instead of being dynamically updated. The first steps of the dynamically constructed ARB and Fixed curves are identical. This initial decrease reflects the effectiveness of the first ARB-preconditioned search direction. The first FGMRES step includes both the additive correction and residual minimization, so its accuracy generally differs from that of a standalone Galerkin RB solve. With this relatively small training set, further iterations are needed to reach the prescribed tolerance, especially in the convection-dominated regime. Using the dynamically constructed ARB preconditioners, the remaining Krylov iterations reduce the residual to the prescribed tolerance in only a few steps. The Fixed curve shows a clear reduction in relative residual norm at the first step, but its subsequent convergence is much less effective. This behavior is consistent with the discussion in \cref{sec:dynamic-arb-construction}: the first reduced space can approximate the initial correction well, whereas subsequent corrections may not be adequately represented by that space, motivating the use of iteration-dependent reduced spaces.

\subsection{Case 2: The anisotropic vortex problem}
\begin{figure}[ht]
\footnotesize
 \centering
 \subfloat[$\boldsymbol{\mu}=(19.665701, 18.256891)$ (the worst case).]{\label{fig:anisotropic-vortex-worst-residual}
  \includegraphics[width=0.45\textwidth]{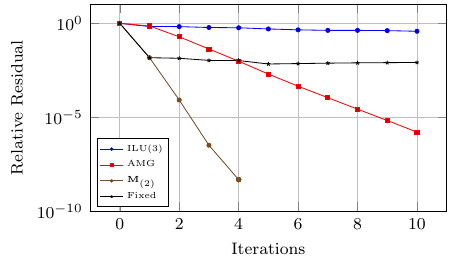}
 }
 \hspace{0.01\textwidth}
 \subfloat[$\boldsymbol{\mu}=(17.401796, 17.512798)$ (the best case).]{\label{fig:anisotropic-vortex-best-residual}
  \includegraphics[width=0.45\textwidth]{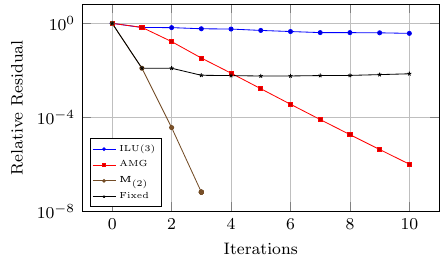}
 }
 \caption{Relative residual norms with different preconditioners for the anisotropic vortex problem.}
 \label{fig:anisotropic-vortex-residuals}
\end{figure}
In this subsection, we consider a convection-diffusion-reaction equation, a variant of the convection-diffusion equation motivated by transport-dominated benchmark problems \cite{dalsanto2017investigation, stynes2005steady}, on the unit square domain $\Omega = (0, 1)^2$:
\begin{displaymath}
    \begin{cases}
    -\varepsilon\Delta u + \mathbf{b}(\mathbf{x}; \boldsymbol{\mu})\cdot\nabla u + \sigma u = f, & \text{in}~\Omega,\\
    u = 0, & \text{on}~\partial\Omega,
    \end{cases}
\end{displaymath}
where $\varepsilon = 0.005$ is the constant diffusion coefficient and $\sigma = 1$ is the reaction coefficient. The source term $f$ is a localized Gaussian function centered at $(0.75, 0.5)$, given by
\begin{displaymath}
    f(x, y) = \exp\left(-100\left((x - 0.75)^2 + (y - 0.5)^2\right)\right).
\end{displaymath}
This problem is characterized by a parameter-dependent, anisotropic rotating velocity field $\mathbf{b}(\mathbf{x}; \boldsymbol{\mu})$. Let the parameter vector be $\boldsymbol{\mu} = (\mu_x, \mu_y) \in \mathcal{D} \subset \mathbb{R}^2$. The velocity field is defined as
\begin{displaymath}
    \mathbf{b}(x, y; \boldsymbol{\mu}) = \begin{bmatrix} \mu_x (y - 0.5) \\ -\mu_y (x - 0.5) \end{bmatrix}.
\end{displaymath}
The parameters $\mu_x$ and $\mu_y$ jointly control the rotation rate and ellipticity of the velocity field. For our numerical experiments, the parameter domain is set to $\mathcal{D} = [15, 20] \times [15, 20]$.

The equation is discretized using continuous piecewise linear Lagrange finite elements on a uniform triangular mesh generated from a $400\times 400$ grid, resulting in $N_h=160{,}801$ degrees of freedom. Since $\varepsilon$ is small and the velocity magnitude is large away from the vortex center, the local Péclet number exceeds one on most elements. We therefore use the same SUPG stabilization strategy as in Case 1:
\begin{displaymath}
    \mathcal{B}_{\mathrm{Gal}}(u, v) + \sum_{K \in \mathcal{T}_h} \int_K \delta_K (\mathbf{b} \cdot \nabla u)(\mathbf{b} \cdot \nabla v)~\mathrm{d}x = \mathcal{L}_{\mathrm{Gal}}(v) + \sum_{K \in \mathcal{T}_h} \int_K \delta_K f (\mathbf{b} \cdot \nabla v)~\mathrm{d}x,
\end{displaymath}
where $\mathcal{B}_{\mathrm{Gal}}$ and $\mathcal{L}_{\mathrm{Gal}}$ denote the standard Galerkin bilinear and linear forms. The stabilization parameter is chosen as
\begin{displaymath}
    \delta_K = \frac{h_K}{2\|\mathbf{b}\|_{\infty,K}} \left( \coth(\mathrm{Pe}_K) - \frac{1}{\mathrm{Pe}_K} \right),\qquad
    \mathrm{Pe}_K = \frac{\|\mathbf{b}\|_{\infty,K} h_K}{2\varepsilon},
\end{displaymath}
where $\|\mathbf{b}\|_{\infty,K}:=\sup_{\mathbf{x}\in K}\|\mathbf{b}(\mathbf{x};\boldsymbol{\mu})\|_2$; when this quantity vanishes, we set $\delta_K=0$.

\begin{table}[t]
\footnotesize
\caption{Results for $100$ random online solves of the anisotropic vortex problem: $n_s=100, L=5$.}\label{tab:anisotropic-vortex}
\begin{center}
\resizebox{\textwidth}{!}{
\begin{tabular}{|c|c|c|c|c|c|}
\hline
KSP Type
& \multicolumn{2}{c|}{GMRES}
& \multicolumn{3}{c|}{FGMRES}\\
\hline

PC Type
& ILU(3)
& AMG
& $\mathbf{M}_{(1)}$
& $\mathbf{M}_{(2)}$
& $\mathbf{M}_{(3)}$\\
\hline

Iterations
& $121.06\pm 4.04$
& $12.14\pm 0.35$
& $3.99\pm 0.10$
& $3.99\pm 0.10$
& $4.00\pm 0.20$\\

\hline

$t_{\mathrm{online}}(\mathrm{s})$
& $1.039\pm 0.019$
& $1.008\pm 0.013$
& $0.627\pm 0.007$
& $0.644\pm 0.011$
& $0.643\pm 0.015$\\

\hline

$t_{\mathrm{offline}}(\mathrm{s})$
& --
& --
& $154.49$
& $156.14$
& $158.23$
\\
\hline
\end{tabular}
 }
\end{center}
\end{table}

\begin{table}[t]
\scriptsize
\caption{Speed-up factors for the anisotropic vortex problem over the $100$ online solves. Each speed-up is computed as $t_{\mathrm{ref}}/t_{\mathrm{ARB}}$ for the same parameter sample.}\label{tab:anisotropic-vortex-speedup}
\begin{center}
\setlength{\tabcolsep}{3pt}
\renewcommand{\arraystretch}{1.05}
\resizebox{\textwidth}{!}{
\begin{tabular}{|c|c|c|c|c|c|c|c|c|c|c|}
\hline
{}
& {}
& \multicolumn{3}{c|}{$\mathbf{M}_{(1)}$}
& \multicolumn{3}{c|}{$\mathbf{M}_{(2)}$}
& \multicolumn{3}{c|}{$\mathbf{M}_{(3)}$}\\
\hline
Parameter range
& Reference solver
& Max
& Mean
& Min
& Max
& Mean
& Min
& Max
& Mean
& Min\\
\hline
$\boldsymbol{\mu}\in[15,20]^2$
& GMRES+ILU(3)
& $1.8\times$
& $1.7\times$
& $1.6\times$
& $1.8\times$
& $1.6\times$
& $1.5\times$
& $1.8\times$
& $1.6\times$
& $1.4\times$\\
\hline
$\boldsymbol{\mu}\in[15,20]^2$
& GMRES+AMG
& $1.7\times$
& $1.6\times$
& $1.5\times$
& $1.6\times$
& $1.6\times$
& $1.5\times$
& $1.7\times$
& $1.6\times$
& $1.4\times$\\
\hline
\end{tabular}
}
\end{center}
\end{table}

The anisotropic vortex problem introduces strong transport and geometric variation through the two velocity parameters. As shown in \cref{tab:anisotropic-vortex}, ILU(3) requires $121.06$ iterations on average and takes about $1.039$ seconds online, whereas AMG reduces the count to $12.14$ and takes about $1.008$ seconds online. The ARB-preconditioned FGMRES methods converge in about $4$ iterations, with online times between $0.627$ and $0.644$ seconds. The speed-up factors in \cref{tab:anisotropic-vortex-speedup} show mean improvements of $1.6\times$--$1.7\times$ over ILU(3) and $1.6\times$ over AMG for all three ARB preconditioners. All minimum speed-ups relative to both ILU(3) and AMG exceed $1\times$, while the mean behavior and robust iteration counts demonstrate a consistent online advantage. The two representative histories in \cref{fig:anisotropic-vortex-residuals} show that this improvement is not merely an averaged effect: for both the worst and the best sampled parameters, the ARB-preconditioned residual falls by several orders of magnitude almost immediately and then reaches the tolerance after a very short Krylov correction phase.

For Type II, the offline time is $156.14$ seconds and the per-solve saving over AMG is $1.008\,\mathrm{s}-0.644\,\mathrm{s}=0.364\,\mathrm{s}$, giving
\begin{displaymath}
    N_{\text{break-even}} = \frac{156.14}{0.364} \approx 429 \text{ solves}.
\end{displaymath}
Thus, after about $429$ parametrized solves, the Type II ARB preconditioner becomes cost-effective for this test.

\begin{remark}
    For this discretization, the diffusion, reaction, and SUPG contributions keep the diagonal entries of $\mathbf{A}_h(\boldsymbol{\mu})$ positive. Hence $\mathbf{T}=\mathbf{D}(\boldsymbol{\mu})=\operatorname{diag}(\mathbf{A}_h(\boldsymbol{\mu}))$ is adopted in the Type III ARB preconditioner.
\end{remark}

\subsection{Case 3: The Stokes equation}
\begin{figure}[ht]
\footnotesize
 \centering
 \subfloat[$\gamma=0.920911$ (the worst case).]{\label{fig:stokes-worst-residual}
  \includegraphics[width=0.45\textwidth]{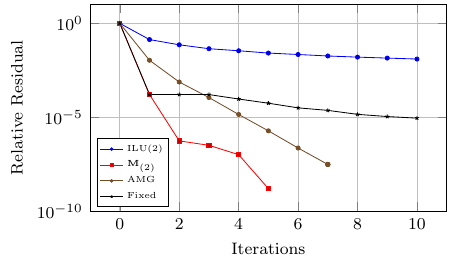}
 }
 \hspace{0.01\textwidth}
 \subfloat[$\gamma=0.142620$ (the best case).]{\label{fig:stokes-best-residual}
  \includegraphics[width=0.45\textwidth]{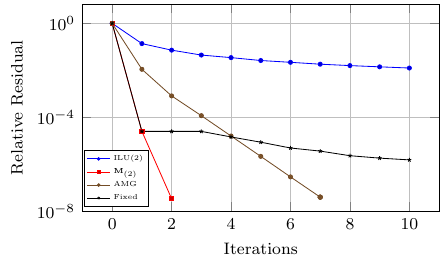}
 }
 \caption{Relative residual norms with different preconditioners for the Stokes equation.}
 \label{fig:stokes-residuals}
\end{figure}
We now consider the Stokes equation, a problem studied extensively in \cite{benzi2005saddle, brezzi1991mixed, silvester1994fast}. 
The system is governed by
\begin{displaymath}
    \begin{cases}
-\nabla \cdot (2\gamma \boldsymbol{\varepsilon}(\mathbf{u})) + \nabla p = \mathbf{f}, & \text{in}~ \Omega, \\
\nabla \cdot \mathbf{u} = 0, &\text{in}~\Omega, \\
\mathbf{u} = \mathbf{0}, &\text{on}~\Gamma_{\!\mathrm{wall}}, \\
\mathbf{u} = \mathbf{u}_{\mathrm{lid}}, &\text{on}~\Gamma_{\!\mathrm{top}},
\end{cases}
\end{displaymath}
where $\mathbf{u}: \Omega\to\mathbb{R}^2$ is the velocity field, $p: \Omega\to\mathbb{R}$ is the pressure, $\gamma>0$ is the dynamic viscosity parameter, $\mathbf{f}=[0, -1]^T$ is the body force representing downward gravitational acceleration, and $\boldsymbol{\varepsilon}(\mathbf{u}) = \frac{1}{2}\left(\nabla\mathbf{u} + \nabla\mathbf{u}^T\right)$ is the strain rate tensor. The domain is the unit square $\Omega = (0,1)\times(0,1)$, whose boundary is partitioned as
\begin{displaymath}
    \Gamma_{\!\mathrm{top}} = \{(x,1): x\in(0,1)\}, \quad
\Gamma_{\!\mathrm{wall}} = \partial\Omega \setminus \Gamma_{\!\mathrm{top}}.
\end{displaymath}
The lid velocity is prescribed as a parabolic profile typical of driven-cavity benchmarks:
\begin{displaymath}
    \mathbf{u}_{\mathrm{lid}}(x) = \begin{bmatrix} U(x) \\ 0 \end{bmatrix}, \quad
U(x) = 4x(1-x).
\end{displaymath}
We use the standard Taylor-Hood finite element pair ($\mathcal{P}_2-\mathcal{P}_1$), which satisfies the inf-sup condition \cite{brezzi1991mixed, elman2014finite}. After imposing the velocity boundary conditions, the discrete saddle-point system has the form
\begin{displaymath}
    \begin{bmatrix}
\mathbf{K} & \mathbf{B}^T \\
\mathbf{B} & \mathbf{O}
\end{bmatrix}
\begin{bmatrix}
\mathbf{u}_h \\
\mathbf{p}_h
\end{bmatrix}
=
\begin{bmatrix}
\mathbf{f}_h \\
\mathbf{0}
\end{bmatrix},
\end{displaymath}
where $\mathbf{K}$ is the viscosity-dependent stiffness matrix and $\mathbf{B}$ is the divergence matrix. The total number of degrees of freedom is $N_h = 362{,}003$, and the viscosity parameter varies over $\gamma\in[0.1,1]$.

The monolithic matrix is symmetric indefinite and has a zero pressure block; consequently, applying BoomerAMG directly to the complete matrix does not respect the saddle-point structure and leads to divergence in the present tests. For a structure-preserving AMG implementation, we use a full Schur-complement field split. BoomerAMG is applied only to the velocity block $\mathbf{K}$, while the pressure Schur operator is preconditioned by the inverse of the viscosity-weighted pressure mass matrix. More precisely, if $\mathbf{M}_p$ denotes the pressure mass matrix, then
\begin{displaymath}
    \mathbf{B}\mathbf{K}^{-1}\mathbf{B}^{T}\sim \gamma^{-1}\mathbf{M}_p,
\end{displaymath}
where $\sim$ denotes spectral equivalence. Hence $(\gamma^{-1}\mathbf{M}_p)^{-1}$ is used for the pressure block, and the constant-pressure nullspace is supplied to the mixed solver. Thus AMG is used only on the elliptic velocity subproblem, with the Schur approximation accounting for velocity--pressure coupling. The AMG entry in \cref{tab:stokes} refers to this structure-preserving field-split implementation.

\begin{table}[t]
\footnotesize
\caption{Results for $100$ random online solves of the Stokes equation: $n_s=30, L=5$.}\label{tab:stokes}
\begin{center}
\resizebox{\textwidth}{!}{
\begin{tabular}{|c|c|c|c|c|c|}
\hline
KSP Type
& \multicolumn{2}{c|}{GMRES}
& \multicolumn{3}{c|}{FGMRES} \\
\hline

PC Type
& ILU(2)
& AMG
& $\mathbf{M}_{(1)}$
& $\mathbf{M}_{(2)}$
& $\mathbf{M}_{(3)}$ \\
\hline

Iterations
& $792.87\pm 51.40$
& $6.78\pm 0.41$
& $3.15\pm 0.83$
& $3.27\pm 0.99$
& $2.21\pm 0.48$ \\
\hline

$t_{\text{online}}(\mathrm{s})$
& $29.312\pm 1.848$
& $4.729\pm 0.423$
& $0.175\pm 0.015$
& $0.178\pm 0.018$
& $0.166\pm 0.013$ \\
\hline

$t_{\text{offline}}(\mathrm{s})$
& --
& --
& $85.58$
& $85.74$
& $86.19$ \\
\hline
\end{tabular}
 }
\end{center}
\end{table}

\begin{table}[t]
\scriptsize
\caption{Speed-up factors for the Stokes equation over the $100$ online solves. Each speed-up is computed as $t_{\mathrm{ref}}/t_{\mathrm{ARB}}$ for the same parameter sample.}\label{tab:stokes-speedup}
\begin{center}
\setlength{\tabcolsep}{3pt}
\renewcommand{\arraystretch}{1.05}
\resizebox{\textwidth}{!}{
\begin{tabular}{|c|c|c|c|c|c|c|c|c|c|c|}
\hline
{}
& {}
& \multicolumn{3}{c|}{$\mathbf{M}_{(1)}$}
& \multicolumn{3}{c|}{$\mathbf{M}_{(2)}$}
& \multicolumn{3}{c|}{$\mathbf{M}_{(3)}$}\\
\hline
Parameter range
& Reference solver
& Max
& Mean
& Min
& Max
& Mean
& Min
& Max
& Mean
& Min\\
\hline
$\gamma\in[0.1,1]$
& GMRES+ILU(2)
& $211.4\times$
& $168.7\times$
& $120.8\times$
& $210.6\times$
& $165.8\times$
& $115.0\times$
& $204.3\times$
& $177.2\times$
& $115.0\times$\\
\hline
$\gamma\in[0.1,1]$
& GMRES+AMG
& $35.1\times$
& $27.2\times$
& $19.0\times$
& $35.0\times$
& $26.8\times$
& $18.1\times$
& $34.0\times$
& $28.6\times$
& $18.1\times$\\
\hline
\end{tabular}
}
\end{center}
\end{table}

\cref{tab:stokes} presents the numerical results for the online tests. In the $100$ random tests, the field-split AMG baseline converged in $6.78$ iterations on average and took $4.729$ seconds online, while ILU(2) required $792.87$ iterations and $29.312$ seconds. The ARB-preconditioned FGMRES methods converged robustly with average iteration counts of $3.15$, $3.27$, and $2.21$ for Types I--III and online times between $0.166$ and $0.178$ seconds. The speed-up statistics in \cref{tab:stokes-speedup} show mean speed-ups of $165.8\times$--$177.2\times$ over ILU(2) and $26.8\times$--$28.6\times$ over AMG; all reported minimum speed-ups remain well above $1\times$. The residual curves in \cref{fig:stokes-residuals} show a gradual decrease for ILU(2) over many Krylov steps, whereas the ARB-preconditioned methods reach the requested tolerance after only a few steps. This behavior is consistent with the RB correction effectively capturing important coupled velocity-pressure components. Thus, for this saddle-point problem, the reduced space is useful as a stabilized preconditioning component.

For the Type II ARB preconditioner, the offline cost is $85.74$ seconds. Its online time is $0.178$ seconds, compared with $4.729$ seconds for AMG, yielding a saving of $4.551$ seconds per solve. The corresponding break-even point is
\begin{displaymath}
    N_{\text{break-even}} = \frac{85.74}{4.729-0.178} \approx 19 \text{ solves}.
\end{displaymath}
Thus, Type II becomes computationally advantageous after about $19$ parametrized solves relative to the AMG baseline.

\begin{remark}
 For Type III, the saddle-point matrix does not have strictly positive diagonal entries because of the pressure block; we therefore set $\mathbf{T}$ to the absolute diagonal of $\mathbf{A}_h(\boldsymbol{\mu})$, replacing entries smaller than $10^{-12}$ by $1$.
\end{remark}

\subsection{Case 4: The Helmholtz equation}
\begin{figure}[ht]
\footnotesize
 \centering
 \subfloat[$k=4.672792$ (the worst case of $k\in [1,5\text{]}$).]{\label{fig:helmholtz-low-wavenumber-residual}
  \includegraphics[width=0.45\textwidth]{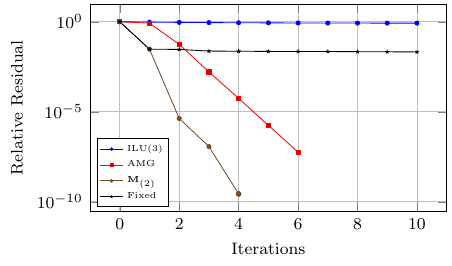}
 }
 \hspace{0.01\textwidth}
 \subfloat[$k=9.960407$ (the worst case of $k\in (5,10\text{]}$).]{\label{fig:helmholtz-high-wavenumber-residual}
  \includegraphics[width=0.45\textwidth]{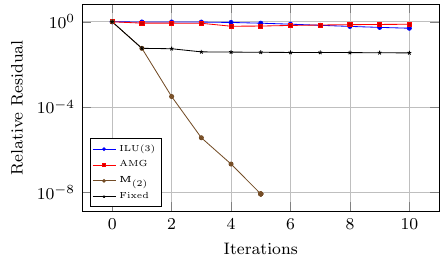}
 }
 \caption{Relative residual norms with different preconditioners for the Helmholtz equation.}
 \label{fig:helmholtz-residuals}
\end{figure}
Finally, we consider the Helmholtz equation, whose high-wave-number finite element discretizations and iterative solvers are known to be challenging \cite{erlangga2008advances, ihlenburg1995finite}:
\begin{displaymath}
    \begin{cases}
    -\Delta u-k^2u=f, &\text{in}~\Omega,\\
    u=0, &\text{on}~\partial\Omega,
\end{cases}
\end{displaymath}
where $u:\Omega\to\mathbb{R}$, $\Omega=(0,1)\times(0,1)$, $f\equiv 1$, and $k\in[1,10]$ is the wave number parameter. We use continuous piecewise linear finite elements on a uniform mesh generated from a $500\times 500$ grid, giving $N_h=251{,}001$ degrees of freedom. The standard finite element discretization yields
\begin{displaymath}
    \mathbf{A}_h(k)\mathbf{u}_h(k)=\mathbf{f}_h,\qquad \mathbf{A}_h(k)=\mathbf{K}-k^2\mathbf{M},
\end{displaymath}
where $\mathbf{K}$ and $\mathbf{M}$ are the stiffness and mass matrices. For this problem, the admissible parameter domain is $\mathcal{D}=\{k\in[1,10]:\mathbf{A}_h(k)\text{ is nonsingular on the working space}\}$. All offline and online samples lie in this domain and thus avoid the discrete resonance values.

\begin{table}[t]
\footnotesize
\caption{Results for the Helmholtz equation. Offline stage: $n_s=50, L=5$ for $k\in [1, 10]$; online stage: $100$ random samples for $k\in [1, 5]$ and $100$ random samples for $k\in (5, 10]$.}\label{tab:helmholtz}
\begin{center}
\resizebox{\textwidth}{!}{
\begin{tabular}{|c|c|c|c|c|c|}
\hline
KSP Type
& \multicolumn{2}{c|}{GMRES}
& \multicolumn{3}{c|}{FGMRES}\\
\hline

PC Type
& ILU(3)
& AMG
& $\mathbf{M}_{(1)}$
& $\mathbf{M}_{(2)}$
& $\mathbf{M}_{(3)}$\\
\hline

Iterations ($k\in[1, 5]$)
& $285.85\pm 410.04$
& $5.69\pm 0.52$
& $3.85\pm 0.36$
& $3.85\pm 0.36$
& $3.57\pm 0.50$\\
\hline

$t_{\mathrm{online}}(\mathrm{s})$
& $1.760\pm 2.442$
& $0.290\pm 0.014$
& $0.097\pm 0.005$
& $0.099\pm 0.005$
& $0.100\pm 0.007$\\
\hline

Iterations ($k\in(5,10]$)
& $803.43\pm 1125.75$
& $18.50\pm 22.97$
& $3.69\pm 0.52$
& $3.71\pm 0.52$
& $3.44\pm 0.52$\\
\hline

$t_{\mathrm{online}}(\mathrm{s})$
& $4.800\pm 6.628$
& $0.596\pm 0.550$
& $0.096\pm 0.007$
& $0.097\pm 0.008$
& $0.099\pm 0.010$\\
\hline

$t_{\mathrm{offline}}(\mathrm{s})$
& --
& --
& $55.78$
& $56.36$
& $56.89$\\
\hline
\end{tabular}
 }
\end{center}
\end{table}

\begin{table}[t]
\scriptsize
\caption{Speed-up factors for the Helmholtz equation based on $100$ online parameter samples in each wave number range. Each comparison includes only samples for which both the reference solver and the ARB solver converge. Each speed-up is computed as $t_{\mathrm{ref}}/t_{\mathrm{ARB}}$ for the same parameter sample.}\label{tab:helmholtz-speedup}
\begin{center}
\setlength{\tabcolsep}{3pt}
\renewcommand{\arraystretch}{1.05}
\resizebox{\textwidth}{!}{
\begin{tabular}{|c|c|c|c|c|c|c|c|c|c|c|}
\hline
{}
& {}
& \multicolumn{3}{c|}{$\mathbf{M}_{(1)}$}
& \multicolumn{3}{c|}{$\mathbf{M}_{(2)}$}
& \multicolumn{3}{c|}{$\mathbf{M}_{(3)}$}\\
\hline
Parameter range
& Reference solver
& Max
& Mean
& Min
& Max
& Mean
& Min
& Max
& Mean
& Min\\
\hline
$k\in[1,5]$
& GMRES+ILU(3)
& $204.6\times$
& $18.5\times$
& $6.8\times$
& $197.3\times$
& $17.9\times$
& $6.4\times$
& $192.8\times$
& $18.1\times$
& $6.3\times$\\
\hline
$k\in[1,5]$
& GMRES+AMG
& $3.6\times$
& $3.0\times$
& $2.7\times$
& $3.4\times$
& $2.9\times$
& $2.5\times$
& $3.3\times$
& $2.9\times$
& $2.5\times$\\
\hline
$k\in(5,10]$
& GMRES+ILU(3)
& $241.0\times$
& $49.1\times$
& $10.0\times$
& $239.5\times$
& $48.9\times$
& $10.0\times$
& $224.4\times$
& $46.9\times$
& $9.9\times$\\
\hline
$k\in(5,10]$
& GMRES+AMG
& $31.5\times$
& $6.1\times$
& $2.9\times$
& $32.6\times$
& $6.1\times$
& $2.9\times$
& $30.1\times$
& $5.9\times$
& $3.1\times$\\
\hline
\end{tabular}
}
\end{center}
\end{table}

Within the tested wave number range, the discrete Helmholtz operator is positive definite for $k^2<\lambda_{1,h}$ and indefinite for $k^2>\lambda_{1,h}$, away from discrete resonances, where $\lambda_{1,h}$ denotes the smallest generalized eigenvalue of $\mathbf{K}\mathbf{q}=\lambda\mathbf{M}\mathbf{q}$ on the homogeneous Dirichlet working space. Indefiniteness and proximity to resonance can make these systems challenging for classical iterative methods \cite{erlangga2006novel, ernst2012difficult}. In the offline stage, we use $n_s=50$ uniformly sampled parameters from $k\in[1,10]$ and construct $L=5$ dynamic reduced spaces. In the online stage, to show the effect of the wave number, we independently test 100 random samples for $k\in[1,5]$ and 100 random samples for $k\in(5,10]$. The statistics in \cref{tab:helmholtz} for ILU(3) and AMG are computed over converged runs only; divergent runs are excluded from these averages. In the $100$ random tests on $k\in[1,5]$, ILU(3) diverged in $2$ cases. On $k\in(5,10]$, AMG diverged in $6$ cases and ILU(3) diverged in $9$ cases. By contrast, all three ARB preconditioners converged for all tested samples, with average iteration counts between $3.44$ and $3.85$ across the two wave number ranges. The speed-up factors in \cref{tab:helmholtz-speedup} confirm that this robustness is also reflected in online time: for $k\in[1,5]$, the mean speed-ups are about $17.9\times$--$18.5\times$ over ILU(3) and $2.9\times$--$3.0\times$ over AMG, while for $k\in(5,10]$ they increase to $46.9\times$--$49.1\times$ over ILU(3) and $5.9\times$--$6.1\times$ over AMG. All reported minimum speed-ups remain above $1\times$ in this test.

The semilog plots in \cref{fig:helmholtz-residuals} illustrate two indefinite cases. Both displayed cases are worst cases in their respective wave number intervals, yet the ARB-preconditioned residual still exhibits a pronounced first-step decrease and then reaches the tolerance in only a few further FGMRES steps. The first drop is consistent with the RB component providing an effective correction for important parameter-dependent error components. The dynamically constructed ARB preconditioner accelerates Krylov convergence relative to ILU(3) or AMG in these tests.

For the Type II ARB preconditioner, the offline cost is $56.36$ seconds. In the range $k\in[1,5]$, the online time of Type II is $0.099$ seconds, compared with $0.290$ seconds for AMG, yielding a saving of $0.191$ seconds per solve. In the range $k\in(5,10]$, the corresponding saving is $0.596\,\mathrm{s}-0.097\,\mathrm{s}=0.499\,\mathrm{s}$ per solve. The break-even points are therefore
\begin{displaymath}
    N_{\text{break-even}} = \frac{56.36}{0.191} \approx 295 \text{ solves}, \qquad
    N_{\text{break-even}} = \frac{56.36}{0.499} \approx 113 \text{ solves}.
\end{displaymath}
Thus, Type II becomes computationally advantageous after about $295$ solves for $k\in[1,5]$ and about $113$ solves for $k\in(5,10]$.

\begin{remark}
The diagonal entries of $\mathbf{A}_h(k)=\mathbf{K}-k^2\mathbf{M}$ are not guaranteed to be positive when the mass term dominates. For Type III, we use the same positive-diagonal substitute as in the Stokes test: the absolute diagonal of $\mathbf{A}_h(k)$, with entries smaller than $10^{-12}$ replaced by $1$.
\end{remark}

\section{Conclusions}\label{sec:conclusions}
In this paper, we have proposed and analyzed a class of ARB preconditioners for accelerating the iterative solution of large-scale parametrized linear systems. The preconditioners are dynamically constructed during the offline stage and applied within the FGMRES iterative solver. Theoretical analysis confirms the nonsingularity of the proposed preconditioners under mild conditions and provides error bounds for the preconditioned Richardson iteration.

Numerical experiments on the convection-diffusion equation, the anisotropic vortex problem, the steady-state Stokes equation, and the Helmholtz equation demonstrate the effectiveness of ARB preconditioners relative to standard preconditioners such as ILU and AMG. The proposed method exhibits robust convergence with respect to parameter variations and achieves significant speed-ups in the online phase. Furthermore, the low break-even points observed in the experiments highlight the practical applicability of ARB preconditioners for multi-query problems that require repeated evaluations of the high-fidelity model.

\section*{Acknowledgments}
The authors are grateful to the anonymous referees for their constructive comments and suggestions that helped improve the quality of this manuscript.

\bibliographystyle{siamplain}
\bibliography{references}
\end{document}